\documentclass[12pt]{article}
\newcommand{\ii}{\mathrm{i}}

\newcommand{\e}{\mathrm{e}}
\newcommand{\St}{\mathcal{S}} 
\newcommand{\Gt}{\mathcal{G}} 
\newcommand{\Mt}{\mathcal{M}} 
\usepackage{graphicx}
\usepackage{amsmath}
\usepackage{amsfonts}
\usepackage{amsthm}
\usepackage{amssymb}
\usepackage{mathtools} 
\newtheorem{theorem}{Theorem}[section]
\newtheorem{lemma}[theorem]{Lemma}

\newtheorem{proposition}{Proposition}[section]
\newtheorem{corollary}[proposition]{Corollary}
\numberwithin{equation}{section}
 \usepackage{enumitem}
 \usepackage{color}
 \usepackage[bookmarks=true,colorlinks=true, pdfstartview=FitV, linkcolor=black, citecolor=blue, urlcolor=black]{hyperref}

\newcommand{\pFq}[5]{{}_{#1}F_{#2} \left( \genfrac{}{}{0pt}{}{#3}{#4}; #5 \right)  }
\title{Ratio and limiting zero distribution asymptotics for symmetric multiple orthogonal polynomials}
\author{Ana Loureiro\footnote{School of Engineering, Mathematics and Physics, University of Kent -- London and South East University Group, Canterbury
CT2 7NF, UK, \texttt{a.loureiro@kent.ac.uk}} \and Walter Van Assche\footnote{Department of Mathematics, KU Leuven, Celestijnenlaan 200B box 2400, BE-3001 Leuven, Belgium, \texttt{walter.vanassche@kuleuven.be}}}

\date{\today}

\begin{document}
\maketitle

\begin{abstract}
We investigate the ratio asymptotics and the asymptotic zero distribution of a sequence of polynomials that satisfy a recurrence relation of order $r+1$ with all recurrence coefficients, except the last one, equal to zero. Such a sequence is part of a system of multiple orthogonal polynomials and  it satisfies the symmetry property $P_n(\omega_{r+1} z) = \omega_{r+1}^n P_n(z)$, where $\omega_{r+1}$ is the
primitive $(r+1)$th root of unity.  
We consider the unbounded regime in which the recurrence coefficients exhibit algebraic growth and, after division by $n^{\gamma}$ become asymptotically periodic and bounded.
After the appropriate scaling, we establish ratio asymptotics and characterize the limiting ratio as the distinguished solution of an algebraic equation. We then determine the limiting zero distribution through its Stieltjes transform and investigate the associated \(\St\)-transform, which in several cases yields connections with hypergeometric polynomial sequences and distributions arising in free probability. The recurrence is represented by a two-diagonal non-self-adjoint Hessenberg operator, so that the limiting zero measure also admits a natural interpretation as a limiting empirical spectral distribution of its rescaled finite sections. Our analysis is based solely on the positivity and asymptotic behavior of the recurrence coefficients and requires no explicit knowledge of the underlying orthogonality measures.
\end{abstract}

\noindent{\bf Keywords.} Multiple orthogonal polynomials, banded Hessenberg matrix, ratio asymptotics, asymptotic limiting zero distribution, free probability

\tableofcontents

\section{Introduction}
We start with the recurrence relation
\begin{equation}  \label{eq:1}
   P_{n+1}(z) = zP_n(z) - \gamma_{n-r} P_{n-r}(z), \quad n\geq r, 
\end{equation}
where $r$ is an arbitrary integer with $r\geq 1$,  $(\gamma_n)_n$ is a sequence of positive real numbers and 
\begin{equation}\label{eq:1b}
P_0=1,\  P_{-1} = \cdots = P_{-r}=0 .
\end{equation} This gives a sequence
of monic polynomials, with $P_n$ of degree $n$, and these polynomials satisfy the symmetry property $P_n(\omega_{r+1} z) = \omega_{r+1}^n P_n(z)$, where $\omega_{r+1}=e^{\frac{2\pi i}{r+1}}$, which can easily be proved by induction. The zeros of the polynomial $P_n(z)$ are all on the $(r+1)$-star 
\begin{equation} \label{Sigma}
\Sigma = \bigcup_{k=0}^r \{ x \omega_{r+1}^k:\  x \geq 0\}.  
\end{equation}
In this paper we will assume that the recurrence coefficients are unbounded, but after scaling they become asymptotically periodic with period $r$, i.e.,
\begin{equation}   \label{eq:asper}
      \lim_{n \to \infty}  \frac{\gamma_n}{n^\gamma} = \alpha_k, \qquad  n \equiv (k-1) \bmod r,
\end{equation} 
with $\gamma >0$ and $\alpha_1,\ldots,\alpha_r \in \mathbb{R}^+$. If we put 
\begin{equation}\label{Q}
	Q_n(z,N) = N^{-\frac{n\gamma}{r+1}} P_n(N^{\frac{\gamma}{r+1}} z),
\end{equation}
then the recurrence \eqref{eq:1} becomes
\begin{equation}  \label{eq:2}
   Q_{n+1}(z,N) = zQ_n(z,N) - \frac{\gamma_{n-r}}{N^\gamma} Q_{n-r}(z,N).
\end{equation}
Our goal is to describe the ratio asymptotics of the polynomial sequence \((P_n(z))_{n\geq 0}\) as well as its asymptotic distribution, under the assumptions regarding the behavior of the recurrence coefficients \(\gamma_n\) outlined above.

Our first main result is:

\begin{theorem}\label{Thm1} Suppose $P_n$ satisfies \eqref{eq:1} and the recurrence coefficients $\gamma_n$ are positive and have the asymptotic behavior given by \eqref{eq:asper}. Then, under the definition \eqref{Q}, we have 
\begin{equation}  \label{eq:ratio}
   \lim_{n \to \infty, n/N \to t} \frac{Q_n(z,N)}{Q_{n+r}(z,N)} = \Phi(z,t),
\end{equation}
uniformly on compact subsets $K$ of $ \mathbb{C}\backslash \Sigma$, where $\Phi$ is a solution of the algebraic equation 
\begin{equation}  \label{eq:6}
	1=\Phi(z,t) \prod_{j=1}^{r}\left(z-\alpha_j t^\gamma \Phi(z,t)\right).
\end{equation}
for which $z^r\Phi(z,t) \to 1$ as $z \to \infty$.
\end{theorem}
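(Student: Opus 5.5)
Taking ratios in \eqref{eq:2} should give an $(r+1)$-term product recursion whose limiting form is \eqref{eq:6}; the task is to prove the ratios converge and to pick out the right root. Set
\[
F_n(z,N)=\frac{Q_{n}(z,N)}{Q_{n+1}(z,N)},
\]
so that \eqref{eq:2} becomes
\[
\frac{1}{F_n}=z-\frac{\gamma_{n-r}}{N^\gamma}F_{n-1}F_{n-2}\cdots F_{n-r}.
\]
Also $Q_n/Q_{n+r}=F_nF_{n+1}\cdots F_{n+r-1}$. If each $F_{n+j}$ has a limit $F_j(z,t)$ along the residue class $n+j\equiv j \bmod r$, then the $r$ limits satisfy the cyclic system $1/F_j=z-\alpha_{k(j)}t^\gamma F_{j-1}\cdots F_{j-r}$. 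Since the indices are taken modulo $r$, the product $F_{j-1}\cdots F_{j-r}$ equals $\Phi=F_1\cdots F_r$ for every $j$. Hence $1/F_j=z-\alpha_{k(j)}t^\gamma\Phi$, and multiplying over $j$ gives \eqref{eq:6}. Because $F_j\sim 1/z$ at infinity, we get $z^r\Phi\to1$. The whole proof therefore reduces to establishing convergence of the $F_n$.

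\textbf{Step 1 (uniform bounds and normality).} Positivity of $\gamma_n$ implies that the zeros of $P_n$ and $P_{n+1}$ lie on $\Sigma$ and interlace in the appropriate sense along each ray. This can be shown by the standard argument for such Hessenberg recurrences, namely total positivity of the banded matrix restricted to $x\ge0$ after the substitution $x\mapsto x^{r+1}$. From this we obtain a partial fraction representation
\[
\frac{Q_n}{Q_{n+1}}(z)=\sum_k \frac{c_{k}}{z-x_k},\qquad c_k>0 .
\]
Using the symmetry, this can equivalently be written as a Stieltjes transform in the variable $z^{r+1}$ with a positive measure of mass one. The largest zero of $Q_n(\cdot,N)$ stays bounded for $n/N\to t$, because the scaled recurrence coefficients $\gamma_{n-r}/N^\gamma$ are bounded by roughly $(\max\alpha_j)t^\gamma$; a Gershgorin-type estimate on the scaled Hessenberg matrix gives this. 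Consequently the family $\{F_n(\cdot,N)\}$ is uniformly bounded on compact subsets of $\mathbb{C}\setminus\Sigma$, and Montel's theorem gives subsequential limits.

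\textbf{Step 2 (identifying limits; the main obstacle).} Take a subsequence along which all $F_{n+j}$, $j=-r,\dots,r$, converge in each residue class. Passing to the limit in the recursion gives the system above, but only in a shifted form: a priori the limits along $n$ and along $n+r$ could differ, so we do not yet know that the same $r$ functions repeat cyclically. I would handle this as follows.
\begin{itemize}
\item First work at large $|z|$. There the recursion $F_n=1/(z-\beta_nF_{n-1}\cdots F_{n-r})$ is a contraction on the set of functions with $|F|\le 2/|z|$, since the derivative in each argument is $O(|z|^{-(r+1)})$.
\item The slowly varying coefficients $\gamma_{n-r}/N^\gamma$ change by $o(1)$ over windows of length $o(N)$ (within a residue class). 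The contraction then forces $F_n$ to approach the unique fixed point of the frozen $r$-periodic system, namely the fixed point that is analytic near infinity with $F\sim1/z$.
\item Uniqueness of the limit near infinity, combined with normality from Step 1 and the identity theorem, gives convergence of the full sequence on all compact subsets of $\mathbb{C}\setminus\Sigma$.
\end{itemize}

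\textbf{Step 3 (the branch of $\Phi$).} The limit $\Phi=\lim Q_n/Q_{n+r}$ satisfies \eqref{eq:6}. It is analytic in $\mathbb{C}\setminus\Sigma$ and behaves like $z^{-r}$ at infinity, so it is the root specified in the statement. This branch is unique, since \eqref{eq:6} has exactly one solution $\Phi$ of order $z^{-r}$ near $\infty$: the other roots behave like multiples of $z$. Uniformity on compact sets $K$ follows from the normal-family argument. The delicate point is the contraction estimate in Step 2, especially controlling the initial transient of length $r$ and making the frozen-coefficient approximation uniform.
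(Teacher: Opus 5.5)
Your argument is correct, but your key step (Step 2) works differently from the paper's.

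\textbf{The paper's route.} It first extracts subsequential limits by Montel. It then proves only asymptotic $r$-periodicity: the differences $D_n$ of ratios taken $r$ indices apart satisfy the linear inequality $|D_{n+1}|\le C_n+\frac{c_2(t)}{\delta}\sum_{\ell=0}^{r-1}|D_{n-\ell}|$, where $C_n\to 0$ comes from $\gamma_n-\gamma_{n-r}$. The discrete Gronwall-type Lemma~\ref{lem:aux1} and Corollary~\ref{cor:aux1} give $D_n\to0$ on compacts whose distance $\delta$ to $\Sigma$ is large, and Vitali extends this. Only afterwards are the subsequential limits identified, through the cyclic system and the uniqueness of the branch of \eqref{eq:6} with $z^r\Phi\to1$.

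\textbf{Your route.} You compare the nonlinear recursion directly with the frozen $r$-periodic system on $\{|z|>R\}$. A fixed-point contraction with slowly varying coefficients, applied over windows of length $L_N\to\infty$ with $L_N=o(N)$, gives convergence of the whole sequence there to an explicitly characterized limit. The initial transient is forgotten geometrically, so your worry about it is unfounded.

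\textbf{What each buys.} Your route needs no subsequences in that region. Branch selection is automatic, since the fixed point in $|F|\le 2/|z|$ is the $z^{-r}$ branch, and Step 3 only matches this limit with the root named in the statement. Normality plus the identity theorem (Vitali) on each sector of $\mathbb{C}\setminus\Sigma$ then finishes, as in the paper. The price is that your contraction region relies on the uniform bound on the zeros. That bound is what makes the $F_n$ analytic, and lie in the ball $|F|\le 2/|z|$, for all $|z|>R$, including points of $\Sigma$. The paper's contraction step uses only the partial-fraction bound at points far from $\Sigma$.

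\textbf{One point to make precise.} The zero bound does not follow from plain Gershgorin applied to $N^{-\gamma/(r+1)}\mathbf{H}_n$: its row sums are of order $N^{r\gamma/(r+1)}$. You need a balancing diagonal similarity, for example $\mathrm{diag}(\lambda^k)$ with $\lambda\asymp N^{\gamma/(r+1)}$. This is what the paper does with $d_k=d^k(k!)^{\gamma/(r+1)}$ in the proof of Proposition~\ref{prop:max zero P}.
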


The function \(\Phi(z,t)\) in \eqref{eq:6} satisfies a number of properties, which are explained in Section \ref{Subsec: Phi}. Most significantly, we can write 
\[
    \Phi(z,t) = t^{-\gamma} z F(t^{-\gamma} z^{r+1})
\]
where \(F\) is a solution of the implicit equation 
\begin{equation}\label{eqF}
	1= y F\left(z^{r+1}\right)A_r(F\left(z^{r+1}\right))  \qquad \text{with} \quad A_r(u) =  \prod_{j=1}^r \left(1 -\alpha_j u \right), 
\end{equation}
for which \(yF(y) \to 1\) as \(y\to \infty\).

Our second main result describes the Stieltjes transform of the asymptotic distribution of the zeros of the rescaled polynomials of large degree. 

\begin{theorem}\label{Thm2} If   \( \Gt (z) \) is the Stieltjes transform of the zero limiting distribution 
of the rescaled polynomial sequence \((Q_n(z))_{n\geq 0} \), then one has  
\begin{equation}   \label{eq:13}
  \Gt (z) 
   = - \frac{1}{r} \int_0^1 \frac{\displaystyle \partial_z \Phi(z,t) }{\Phi(z,t)} \, \mathrm{d}t , 
\end{equation}
uniformly on compact sets of $\mathbb{C} \setminus \Sigma$, where \(\Phi(z,t)\) is given by  \eqref{eq:ratio}-\eqref{eq:6} and such that $z^r\Phi(z,t) \to 1$ as $z \to \infty$. Moreover, we have 
\begin{align}
	\Gt (z) 
	&= - \frac{1}{r\,z}  
	\left( 1- \frac{(r+1)}{\gamma} \frac{1}{\left(F(y) A_r(F(y))\right)^{\frac{1}{\gamma}}} \int_0^{F(y)}  \left(u \, A_r(u)\right) ^{1/\gamma}	\frac{1}{u}\mathrm{d}u  \right), \label{G_integral1}
\end{align}
with  \(y=z^{r+1}\), with  \(A_r\) and \(F\) as in \eqref{eqF}. 
\end{theorem}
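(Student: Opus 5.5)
The plan is to realise $\Gt$ as a limit of normalized logarithmic derivatives of $Q_n(z,n)$ (we take $N=n$), and to turn that logarithmic derivative into a Riemann sum of the ratios controlled by Theorem \ref{Thm1}.

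\emph{Step 1: tightness.} Let $\mu_n$ be the normalized zero counting measure of $Q_n(\cdot,n)$. Then $\int \frac{\mathrm{d}\mu_n(x)}{z-x} = \frac{1}{n}\frac{\partial_z Q_n(z,n)}{Q_n(z,n)}$. All zeros lie on $\Sigma$. Bounded sections of a banded Hessenberg matrix give a uniform bound on their size after the scaling \eqref{Q}: with $|\gamma_k| \le C k^\gamma$ and $k\le n$, the rescaled entries $\gamma_{k}/n^\gamma$ are bounded. Hence the $\mu_n$ are supported in a fixed compact subset of $\Sigma$. Consequently, pointwise convergence of the Stieltjes transforms on $\mathbb{C}\setminus\Sigma$ identifies the limiting zero distribution.

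\emph{Step 2: telescoping.} Write $n=mr+s$ with $0\le s<r$. Then
\[
Q_n(z,n)=Q_s(z,n)\prod_{j=0}^{m-1}\frac{Q_{s+(j+1)r}(z,n)}{Q_{s+jr}(z,n)} .
\]
Take logarithmic derivatives and average the analogous identities for $Q_{n-i}$, $i=0,\dots,r-1$, which differ from $Q_n$ only by $O(1)$ factors. Up to $O(1/n)$ errors this gives
\[
\frac{1}{n}\frac{\partial_z Q_n}{Q_n}
=-\frac{1}{r}\,\frac{1}{n}\sum_{k=0}^{n-r}\frac{\partial_z R_k(z,n)}{R_k(z,n)},\qquad R_k=\frac{Q_k(\cdot,n)}{Q_{k+r}(\cdot,n)} .
\]
By Theorem \ref{Thm1}, $R_k(z,n)\to\Phi(z,k/n)$ whenever $k/n\to t$. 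Since these are analytic functions converging uniformly on compacts of $\mathbb{C}\setminus\Sigma$, the derivatives converge too (Weierstrass). We then need this convergence to be uniform in $k\le n$. Suppose it failed; then there would be a sequence $k_n$ with $k_n/n\to t$ along which the error stays bounded away from zero, contradicting Theorem \ref{Thm1}. This argument also covers $t=0$.

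The main obstacle is twofold. First, the terms with $k$ bounded are not covered by the limit, but there are $O(1)$ of them, and they are uniformly bounded on compacts away from $\Sigma$ (for instance via the bound $|R_k|\le \mathrm{dist}(z,\Sigma)^{-r}$-type estimates coming from the zeros lying on $\Sigma$). Second, we need continuity of $t\mapsto \partial_z\Phi/\Phi$ on $[0,1]$, including at $t=0$, where $\Phi(z,0)=z^{-r}$. Granting both, the Riemann sum converges to \eqref{eq:13}.

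\emph{Step 3: the explicit formula.} Use $\Phi(z,t)=t^{-\gamma}zF(w)$ with $w=t^{-\gamma}y$ and $y=z^{r+1}$. Then
\[
\frac{\partial_z\Phi}{\Phi}=\frac1z+(r+1)\frac{z^r t^{-\gamma}F'(w)}{F(w)}=\frac1z\Big(1+(r+1)\frac{wF'(w)}{F(w)}\Big).
\]
Substitute $t=(y/w)^{1/\gamma}$, so that $\mathrm{d}t=-\frac1\gamma y^{1/\gamma}w^{-1/\gamma-1}\mathrm{d}w$; as $t$ runs over $[0,1]$, $w$ runs from $\infty$ down to $y$. This gives
\[
\int_0^1\frac{wF'(w)}{F(w)}\,\mathrm{d}t=\frac{y^{1/\gamma}}{\gamma}\int_y^\infty w^{-1/\gamma}\frac{F'(w)}{F(w)}\,\mathrm{d}w .
\]
Next put $u=F(w)$, so that $w=1/(uA_r(u))$ by \eqref{eqF} and $F'(w)\,\mathrm{d}w/F(w)=\mathrm{d}u/u$; as $w\to\infty$ we have $u\to0$. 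The integral becomes $-\frac{y^{1/\gamma}}{\gamma}\int_0^{F(y)}(uA_r(u))^{1/\gamma}\frac{\mathrm{d}u}{u}$. Finally $y^{1/\gamma}=(F(y)A_r(F(y)))^{-1/\gamma}$, which yields \eqref{G_integral1}.

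The branch of the power $1/\gamma$ and the path of $u$ (the image under $F$ of the ray from $y$ to $\infty$) are fixed by analytic continuation from large real positive $y$. There everything is positive and $F$ is monotone, and both sides of \eqref{G_integral1} are analytic in $z$ on $\mathbb{C}\setminus\Sigma$, so the identity extends.
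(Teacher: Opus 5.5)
Your proposal is correct and follows essentially the same route as the paper: you differentiate the ratio limit of Theorem~\ref{Thm1}, telescope the logarithmic derivatives along a residue class mod $r$ into a Riemann sum giving \eqref{eq:13}, and then substitute $u=F(t^{-\gamma}z^{r+1})$, using $t^{-\gamma}z^{r+1}=1/(uA_r(u))$, to obtain \eqref{G_integral1}. Three small points: the averaging over $Q_{n-i}$ is unnecessary, because a single residue class already has mesh $r/n$ and that is where the factor $1/r$ comes from; the bounded-$k$ terms are controlled through their logarithmic derivatives, most simply by $|Q_m'/Q_m|\le m/\mathrm{dist}(z,\Sigma)$, rather than by a bound on $|R_k|$; and your extra care with tightness, the endpoint $t=0$ (where $\Phi(z,t)\to z^{-r}$ since $F(w)\sim 1/w$) and the branch of the $1/\gamma$-power fills in points the paper leaves implicit.
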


The density for the asymptotic zero distribution can be obtained via the inverse Stieltjes transform of $\Gt$. Since \(\Gt\) is analytic in \(\mathbb{C}\backslash \mathbb{R}\) and \(\Gt(z)=\mathcal{O}(1/z)\) as \(z\to +\infty\), then there exists a function \(\sigma\) defined on \(\mathbb{R}\) that 
\[\Gt (z) = \frac{1}{2\pi} \int_{\mathbb{R}} \frac{\sigma(t)}{z-t} \mathrm{d} t\] 
and \(\sigma\) is the density function we want. It can be obtained via the Sokhotski-Plemelj formula, which means that 
\[
	\sigma(x) = \frac{1}{2\pi\ii} \left( \Gt_{+}(x) - \Gt_{-}(x) \right)
\]
where \(\Gt_{\pm}(x) \) corresponds to boundary values of the function  \(\Gt(z) \) when \(z\) approaches the real line from above and from below, precisely 
\[\Gt_{\pm}(x) = \lim_{\epsilon \to 0+} \Gt(x\pm\ii \epsilon) . \]

\subsection{Background and motivation}

The case where the coefficients are  bounded and all have the same asymptotic behavior was first studied in  \cite{AKVI}, where a connection with multiple orthogonality (on the step-line) was made. There it is referred to as vector orthogonality, whilst linking it to Hermite-Pad\'e approximation. A subsequent study \cite{AKS} considered the ratio and weak asymptotics for the case where the recurrence coefficients are an $\ell^{1}$-perturbation of a constant positive coefficient. For the asymptotically periodic and  bounded recurrence coefficients, weak and ratio asymptotics were subsequently obtained in \cite{DelLop}, together with a vector-equilibrium interpretation. Closely related asymptotic questions have also been investigated for multiple orthogonal  polynomials associated with Nikishin systems, in particular on star-like sets: see \cite{DelLop,LGLL1,LGMD} and  references therein. See also \cite{WVA} for ratio asymptotics of multiple orthogonal  polynomials under convergence of the nearest-neighbour recurrence coefficients. 

A central novelty of the present work is that the recurrence coefficients are allowed to be unbounded with algebraic growth and a periodic modulation in the leading term (as detailed in \eqref{eq:asper}). More precisely, we study the ratio asymptotics and the corresponding limiting asymptotic distribution of the zeros of the polynomial sequence $(P_n)_{n\geq 0}$ directly from the recurrence relation \eqref{eq:1}, based solely on the positivity and the asymptotic condition \eqref{eq:asper}. Thus, for some $\gamma>0$ and $\alpha_1,\ldots,\alpha_r>0$,  $\gamma_{m r + k-1} \sim \alpha_k \, (mr+k-1)^{\gamma},\quad k=1,2,\ldots, r$ as $m\to\infty$. In particular, our analysis does not require explicit knowledge of the orthogonality measures, nor any 
additional structural assumption on them, such as the Nikishin property. Our main results are stated in Theorem \ref{Thm1} and Theorem \ref{Thm2}. In Section \ref{Sec RA} we first establish the estimates and interlacing properties of the zeros needed to prove the ratio asymptotics. These asymptotics are  then used in Section \ref{sec:Asympt} to obtain the limiting zero distribution through its Stieltjes transform (here denoted as \(\Gt\)-transform). We further investigate the associated \(\St\)-transform, building on the finite-free convolution framework \cite{MF-M-P1} and the asymptotic analysis framework for generalized hypergeometric polynomials developed in \cite{MF-MP2}, identifying cases where our \(\St\)-transforms relate to those arising from hypergeometric polynomials outlined therein. Finally, in Section \ref{Sec:examples} we work out several examples in detail. The last example treats the case in which one residue class of the recurrence coefficients has a different leading asymptotic coefficient from the remaining $r-1$ classes. One of the families studied in detail in \cite[Case B]{AnaWVA} provides a particular case with $r=2$. For reference, the
Appendix collects the basic properties and transforms of the Marchenko-Pastur, Kesten-McKay and the Fuss-Catalan distributions that are used throughout the examples. 

There is also a natural spectral interpretation of these results. The polynomial \(P_n\) is the characteristic polynomial of the \(n\times n\) principal truncation \(\mathbf{H}_n\) in \eqref{Hn} of the two-diagonal Hessenberg operator associated with the recurrence \eqref{eq:1}. Consequently, the zeros of the rescaled polynomial \(Q_n\) are the eigenvalues of the corresponding rescaled Hessenberg matrix. Thus, from this point of view, Theorem~\ref{Thm2} describes the limiting empirical spectral distribution of a family of non-self-adjoint banded Hessenberg matrices with algebraically growing recurrence coefficients whose leading-order behavior is periodic. Equivalently, its Stieltjes transform is obtained as the limit of the normalized trace of the resolvents of these finite sections. 
With the rescaling \eqref{Q}, then we have 
\[
	Q_{n}(z;N) = \det \left(z \mathbf{I} - N^{-\gamma/(r+1)} \mathbf{H}_n\right)
\]
and, consequently, when $n=N$, 
\[ \frac{1}{N} \frac{Q_N'(z;N)}{Q_N(z;N)} = \frac{1}{N} \mathrm{Tr} \left(z \mathbf{I} - N^{-\gamma/(r+1)} \mathbf{H}_N\right)^{-1}\ .\]

These results also have a natural connection with spectral theory and random matrix models.  Multiple orthogonal polynomials occur as average characteristic polynomials and in the correlation structure of several random matrix models and related determinantal point processes.  For example, they are average characteristic polynomials of Hermitian random matrices with external source  \cite{BleherKuijlaars04}, two-matrix models \cite{DuitsKuijlMo} and non-intersecting path models \cite{DaemsKuijl}.

As such, Theorem~\ref{Thm2} suggests a potentially useful framework: whenever the average characteristic polynomial of the relevant multiple orthogonal polynomials of a model have recurrence coefficients satisfying the asymptotic condition $\gamma_n \sim \alpha_k \, n^\gamma$, the global zero asymptotics of the associated average characteristic polynomials could potentially be obtained directly from their recurrence coefficients, without first carrying out a full Riemann-Hilbert or equilibrium analysis. In models where the limiting zero distribution of the average characteristic polynomials is known to coincide with the limiting eigenvalue distribution, this may also provide an alternative route to the global spectral law.

In special cases, the limiting measures arising here
coincide with familiar distributions from random matrix theory and free
probability. For instance, the Fuss-Catalan law occurring in
Section \ref{Sec:examples} is also the limiting squared singular-value distribution
for products of Ginibre matrices. The $\St$-transform therefore provides a
useful tool for comparing the spectral laws arising from the recurrence
\eqref{eq:1} with those coming from random matrices and from hypergeometric
polynomial ensembles.

\section{Ratio asymptotics}\label{Sec RA}

We begin by showing that the zeros of each of the polynomials  $P_n$ satisfying \eqref{eq:1} with $\gamma_n>0$ 
enjoy interlacing properties, rotational symmetry and are all lying on the $(r+1)$-star $\Sigma $ in \eqref{Sigma}. These structural properties are fundamental to the study of ratio asymptotics. Accordingly, in Section \ref{Subsec Proof Thm1} we prove our first main result -- Theorem \ref{Thm1} -- and in Section \ref{Subsec: Phi} we analyze the relevant properties of the function \(\Phi\) describing the ratio asymptotics.

\subsection{Properties of the zeros}

The asymptotic behavior of the $\gamma$-coefficients is closely related to that of the largest zero of each polynomial. For the case of two measures $(r=2)$ this has been shown in \cite[Th. 2.6]{AnaWVA}, whose proof can be straightforwardly extended to the case of $r\geq 2$ measures and, in this case, we have:

\begin{proposition}\label{prop:max zero P} Suppose $P_n(x)$ is a polynomial of degree $n$ satisfying \eqref{eq:1} with $\gamma_n>0$ 
and that \eqref{eq:asper} holds. If $\alpha=\max\limits_{k\in\mathbb{Z}_r}\{\alpha_k\}$, then 
the largest zero in absolute value $|x_{n,n}|$ of $P_n(x)$ satisfies
\begin{equation}
	|x_{n,n}| \leq  \left( 1 + \frac{1}{r}\right) \left( \alpha\  r \ n^{\gamma} \right)^{1/(r+1)}+ o(n^{\gamma/(r+1)}) \quad \text{as}\quad n\to+\infty.
\end{equation}
\end{proposition}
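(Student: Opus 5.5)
Since $P_n$ is the characteristic polynomial of the $n\times n$ truncation $\mathbf{H}_n$ of the two-diagonal Hessenberg operator (ones on the superdiagonal, $\gamma_{j}$ on the $r$-th subdiagonal), every zero is an eigenvalue of $\mathbf{H}_n$. I plan to bound $|x_{n,n}|$ by a weighted norm of $\mathbf{H}_n$, namely the norm after a diagonal similarity $D^{-1}\mathbf{H}_nD$. Eigenvalues are invariant under similarity, so $|x_{n,n}|\le \|D^{-1}\mathbf{H}_nD\|_\infty$ (maximum absolute row sum) for every positive diagonal $D=\mathrm{diag}(d_0,\dots,d_{n-1})$. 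Row $j$ of $D^{-1}\mathbf{H}_nD$ contains at most the two entries $d_{j+1}/d_j$ and $\gamma_{j-r}\,d_{j-r}/d_j$. The task is therefore to choose $d_j$ so that
\[
\max_{0\le j<n}\left(\frac{d_{j+1}}{d_j}+\gamma_{j-r}\frac{d_{j-r}}{d_j}\right)
\]
is as small as possible.

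I would use a geometric choice $d_{j+1}/d_j=\lambda$ with a single $\lambda>0$ to be optimized. The row sum is then bounded by $\lambda+\Gamma_n\lambda^{-r}$, where $\Gamma_n=\max_{j\le n}\gamma_j$. Minimizing $\lambda+\Gamma\lambda^{-r}$ over $\lambda>0$ gives $\lambda=(r\Gamma)^{1/(r+1)}$, and the minimum value is
\[
\lambda+\frac{\lambda}{r}=\Bigl(1+\frac1r\Bigr)(r\Gamma)^{1/(r+1)}.
\]
This is exactly the constant in the statement once $\Gamma_n$ is replaced by $\alpha n^\gamma$.

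It remains to show $\Gamma_n=\max_{j\le n}\gamma_j\le \alpha n^\gamma(1+o(1))$. By \eqref{eq:asper}, for every $\varepsilon>0$ there is $j_0$ with $\gamma_j\le(\alpha+\varepsilon)j^\gamma\le(\alpha+\varepsilon)n^\gamma$ for $j_0\le j\le n$, because $\gamma>0$ makes $j^\gamma$ increasing. The finitely many $\gamma_j$ with $j<j_0$ are bounded by a constant, which is $o(n^\gamma)$. Hence $\Gamma_n\le(\alpha+\varepsilon)n^\gamma$ for large $n$. Taking the $(r+1)$-th root and using $(a+b)^{1/(r+1)}\le a^{1/(r+1)}+b^{1/(r+1)}$ turns the $\varepsilon$-error into $o(n^{\gamma/(r+1)})$, which gives the claim.

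The only delicate points are that the similarity bound already handles non-normality, and the boundary rows, where the $\gamma$ term is absent. Those rows only lower the row sums, so there is no genuine obstacle. Should the authors' approach via \cite[Th. 2.6]{AnaWVA} follow a zero-interlacing or Gershgorin-type argument instead, this estimate is an equivalent and self-contained replacement.
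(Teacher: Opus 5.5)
Your proof is correct and follows essentially the same route as the paper: bound the spectral radius of $\mathbf{H}_n$ by the maximum-row-sum norm of a diagonal similarity $S^{-1}\mathbf{H}_nS$, then optimize a free scalar parameter to obtain the constant $(1+1/r)(\alpha r)^{1/(r+1)}$. The only difference is the choice of weights. The paper uses $n$-independent weights $d_k=d^k(k!)^{\gamma/(r+1)}$ with $d=(\alpha r)^{1/(r+1)}$, so that each row sum grows like $k^{\gamma/(r+1)}$. Your geometric weights $\lambda^k$ with $\lambda=(r\Gamma_n)^{1/(r+1)}$ depend on $n$ and reduce the $o(n^{\gamma/(r+1)})$ bookkeeping to the bound $\max_{j\le n}\gamma_j\le(\alpha+\varepsilon)n^\gamma$.
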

\begin{proof} Consider the Hessenberg matrix 
\begin{equation}\label{Hn}
	\mathbf{H}_n = \left( 
		\begin{array}{ccccccccc}
			0 		 & 1 			& 0 &  \cdots & 0 & 0 & 0 & 0 & 0\\
			0 		 & 0 			& 1 & 0 & \cdots & 0 & 0 & 0 & 0\\
			\vdots & \vdots & \\
			0	&   &   & \ddots \\
			\gamma_1 & 0 			&  & \cdots &  1 & \cdots &  & 0 & 0 \\
			0 		 & \gamma_2 	& 0 &  & \cdots & 1 & \cdots & 0 & 0 \\
			    		 &   &   \ddots&    &   &      &   \ddots&  & \\
					&   &  &   \ddots &   &   &      & \ddots  &  \\
			   		0 & 0  & 0 & \cdots &  \gamma_{n-r-1}  & \cdots &   0&   0& 1 \\
			   		 0 &   0&  0&0  & \cdots &   \gamma_{n-r}  &   \cdots &   0& 0  \\
		\end{array}
	\right)
\end{equation}
so that the recurrence relation \eqref{eq:1} can be expressed as 
\[
	\mathbf{H}_n \left(\begin{array}{c} P_0(x) \\ P_1(x) \\ \vdots \\ P_{n-1}(x)\end{array}\right)
	= x \left(\begin{array}{c} P_0(x) \\ P_1(x) \\ \vdots \\ P_{n-1}(x)\end{array}\right)
	- P_n(x) \left(\begin{array}{c} 0 \\ 0\\ \vdots \\ 1\end{array}\right)
\]
and each zero of $P_n$ is an eigenvalue of the matrix $\mathbf{H}_n$. 
The spectral radius of the matrix $\mathbf{H}_n$, 
\[
	\rho(\mathbf{H}_n) = \max \{ |\lambda| : \ \lambda \text{ is an eigenvalue of } \mathbf{H}_n\}, 
\]
is bounded from above by  $|| \mathbf{H}_n ||$ where $|| \cdot ||$ denotes a matrix norm (see \cite[Section 5.6]{HJ}). 
We take the matrix norm 
\[
	\left\Vert \mathbf{H}_n \right\Vert_S =|| S^{-1}  \mathbf{H}_n S ||_{\infty} = \max_{1\leq i\leq n} \left\{ \sum_{j=1}^n \left|(S^{-1} \mathbf{H}_n S)_{i,j} \right| \right\}, 
\]
where $S$ corresponds to a non-singular matrix and $(S^{-1} \mathbf{H}_n S)_{i,j}$ denotes the $i$th row and $j$th column entry of the product matrix $S^{-1} \mathbf{H}_n S$. In particular if $S$ is an invertible diagonal matrix   $S = \text{diag}(d_1,\ldots, d_k, \ldots, d_{n})$, then 
\[
\begin{multlined}
	\left\Vert \mathbf{H}_n \right\Vert_S \\
	= \max \left\{ \tfrac{d_2}{d_1}, \ldots ,  \tfrac{d_{r+1}}{d_r},    \tfrac{d_{r+2} + d_1 \gamma_1}{d_{r+1}}, \ldots, \tfrac{d_j + d_{j-r-1} \gamma_{j-r-1}}{d_{j-1}} , \ldots, 
	\tfrac{d_n + d_{n-r-1} \gamma_{n-r-1}}{d_{n-1}},  \tfrac{ d_{n-r} \gamma_{n-r}}{d_{n}} \right\}. 
\end{multlined}
\]
Setting $d_k= d^{k} (k!)^{\frac{\gamma}{r+1}}\neq 0$, for some  positive constant $d$, gives 
\[
	\left\Vert \mathbf{H}_n \right\Vert_S \leq \left(d+\frac{\alpha}{d^{r}}\right) n^{\gamma/(r+1)} + o(n^{\gamma/(r+1)}) \quad \text{as }\ n\to +\infty. 
\]
The choice of $d=\left(\alpha r\right)^{1/(r+1)}$ gives a minimum to $\left(d+\frac{\alpha}{d^{r}}\right)$, so that  
\[
	\left\Vert \mathbf{H}_n \right\Vert_S \leq  \left( 1 + \frac{1}{r}\right) \left( \alpha\  r \ n^{\gamma} \right)^{1/(r+1)}+ o(n^{\gamma/(r+1)}) \quad \text{as }\ n\to +\infty, 
\]
which implies the result. 
\end{proof}

Except at the origin, all the zeros of each $P_n$ satisfying \eqref{eq:1} are simple and interlace with those of $P_{n+1},\ldots, P_{n+r-1}$, as formally stated next. \\

\begin{proposition}[{\it c.f.} {\cite[Th. 2.2]{BR08}}]
\label{prop:zeros}
Suppose $P_n$ satisfies \eqref{eq:1} with $\gamma_n>0$. For each $n=(r+1)m+j$ with $m\geq0$ and $j\in\mathbb{Z}_{r+1}$, the following statements hold: 
\begin{enumerate}[label={\normalfont(\alph*)}]
\item If $z$ is a zero of $P_n$, then so is $\omega^j z$ for any $j\in\mathbb{Z}_{r+1}$, where $\omega=\exp(2\pi {\rm i}/(r+1))$. 
\item $P_n$ has a zero of multiplicity $j$ at the origin  and another $m$ distinct positive real zeros, which we denote by $x_{n,j,s}$ with $s\in\{1,\ldots , m\}$ and therefore we order them as
\[	
	0< x_{n,j,1} < x_{n,j,2} <\ldots < x_{n,j,m}. 
\]
\item The positive zeros $x_{n,j,s}$ interlace: 
\[
	0 < x_{n,0,1}<\ldots < x_{n,r,1} < x_{n,0,2} < \ldots < x_{n,j,s}<x_{n,j+1,s} < \ldots < x_{n,r-1,m}<x_{n,r,m} . 
\]
\end{enumerate}

\end{proposition}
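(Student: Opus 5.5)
Part (a) follows at once from the symmetry $P_n(\omega z)=\omega^nP_n(z)$, which we prove by induction on \eqref{eq:1}. Because $\omega^{r+1}=1$, the factor $\omega^{n-r}$ coming from $P_{n-r}$ equals $\omega^{n+1}$, so the induction step closes. The same symmetry shows that $P_n(z)=z^{j}p_{n}(z^{r+1})$ for some polynomial $p_n$ of degree $m$ when $n=(r+1)m+j$. The remaining statements therefore become statements about the polynomials $p_n(x)$ for $x>0$.

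For (b) and (c) we write $x=z^{r+1}$ and split the sequence into the $r+1$ residue classes. Substituting into \eqref{eq:1} gives, for $n=(r+1)m+j$:
\begin{itemize}
\item $p_{n+1}=p_n-\gamma_{n-r}\,p_{n-r}$ when $j<r$, after dividing by $z^{j+1}$;
\item $p_{n+1}=x\,p_n-\gamma_{n-r}\,p_{n-r}$ when $j=r$.
\end{itemize}
The degrees are consistent: for $j<r$, $p_{n-r}$ has degree $m-1$, while for $j=r$ the polynomials $p_n$ and $p_{n-r}$ both have degree $m$.

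The main step is to prove by induction on $n$ a strengthened interlacing statement. The hypothesis $\mathcal{H}(n)$ says that the $r+1$ consecutive polynomials $p_{n-r},\dots,p_n$ each have only simple positive zeros, in the pattern (c). Concretely, listing all their zeros together in increasing order, they cycle through the residue classes $0,1,\dots,r$ in turn. In addition we carry sign information at the ends: each $p_k$ has leading coefficient $1$, and $p_k(0)$ has sign $(-1)^{\deg p_k}$ up to the positive factors $\gamma$. We also keep track of the signs of every $p_k$ at the zeros of its neighbours.

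For the step we evaluate the relation $p_{n+1}=c(x)p_n-\gamma_{n-r}p_{n-r}$ at the zeros of $p_n$, where $c=1$ or $x$ and $x>0$. At a zero $\xi$ of $p_n$ we get $p_{n+1}(\xi)=-\gamma_{n-r}p_{n-r}(\xi)$. By $\mathcal{H}(n)$ the values $p_{n-r}(\xi)$ alternate in sign as $\xi$ runs through the zeros of $p_n$, so $p_{n+1}$ changes sign between consecutive zeros of $p_n$. Comparing signs near $0$ and at $+\infty$ then locates the remaining zeros; this is the standard Sturm-type argument, as in \cite[Th. 2.2]{BR08}. Counting degrees shows that all zeros of $p_{n+1}$ are real, positive and simple, and that they sit where (c) requires.

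I expect the main obstacle to be the bookkeeping. The alternation of $p_{n-r}$ at the zeros of $p_n$ is not immediate from pairwise interlacing, and it must be extracted from the full cyclic pattern of all $r+1$ polynomials. To handle this I would first try to phrase $\mathcal{H}(n)$ as the statement that between two consecutive zeros of $p_n$ each of $p_{n-1},\dots,p_{n-r}$ has exactly one zero. I would also treat the wrap-around case $j=r$ separately, where the factor $x$ supplies the extra zero near the origin. Once the zeros of $p_n$ are known, (b) follows: the $m$ zeros of $p_n$ give the $m$ distinct positive zeros $x_{n,j,s}$ of $P_n$, after taking $(r+1)$-th roots, and the factor $z^{j}$ gives the zero of multiplicity $j$ at the origin.
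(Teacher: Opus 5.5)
Your reduction is sound. The symmetry, the factorization $P_n(z)=z^jp_n(z^{r+1})$, the two reduced recurrences, and taking the full cyclic ordering of the zeros of $p_{n-r},\dots,p_n$ as the induction hypothesis $\mathcal{H}(n)$ are the right framework. The paper itself gives no argument here and only cites \cite[Th.~2.2]{BR08}.

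The gap is in the inductive step. Evaluating $p_{n+1}=c\,p_n-\gamma_{n-r}p_{n-r}$ only at the zeros of $p_n$, at $0$ and at $+\infty$ shows that $p_{n+1}$ has simple positive zeros interlacing with those of $p_n$. It says nothing about where a new zero sits inside a gap $(\xi_k,\xi_{k+1})$ of $p_n$, relative to the zeros of $p_{n-r+1},\dots,p_{n-1}$ lying in the same gap. That position is exactly what $\mathcal{H}(n+1)$ asserts. Without it you cannot guarantee, at the next step, that $p_{n+1-r}$ has exactly one zero between consecutive zeros of $p_{n+1}$, so the alternation you rely on is lost. Hence ``they sit where (c) requires'' does not follow from what you compute.

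Conversely, the step you flag as the main obstacle is harmless. Since $p_n$ and $p_{n-r}$ lie in adjacent residue classes $j$ and $j+1 \pmod{r+1}$, $\mathcal{H}(n)$ already contains their strict interlacing. Your suggested rephrasing of $\mathcal{H}(n)$ (one zero of each of $p_{n-1},\dots,p_{n-r}$ per gap of $p_n$) would discard the order inside each gap, which is precisely the information that has to be propagated.

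The missing idea is to evaluate the recurrence also at the zeros $\eta$ of $p_{n-r}$, where $p_{n+1}(\eta)=c(\eta)\,p_n(\eta)$ with $c(\eta)>0$.
\begin{itemize}
\item For $j<r$, let $\xi_1<\eta_1<\xi_2<\cdots<\eta_{m-1}<\xi_m$ be the zeros of $p_n$ and $p_{n-r}$. Then $p_{n+1}(\xi_k)$ has sign $(-1)^{m-k+1}$ while $p_{n+1}(\eta_k)$ has sign $(-1)^{m-k}$. So $p_{n+1}$ has a zero in each $(\xi_k,\eta_k)$ and one in $(\xi_m,\infty)$.
\item For $j=r$, with $\eta_1<\xi_1<\cdots<\eta_m<\xi_m$, use the same computation together with the sign $(-1)^{m+1}$ of $p_{n+1}(0)=-\gamma_{n-r}p_{n-r}(0)$. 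This gives zeros in $(0,\eta_1)$, in each $(\xi_k,\eta_{k+1})$, and in $(\xi_m,\infty)$.
\end{itemize}
By degree count these are all the zeros.

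In the cyclic listing of $\mathcal{H}(n)$, no zero of $p_{n-r+1},\dots,p_{n-1}$ lies between a zero of $p_n$ and the next zero of $p_{n-r}$, nor below $\eta_1$ when $j=r$. Also, $p_{n+1}$ is in the same residue class as the discarded $p_{n-r}$. So the zeros of $p_{n+1}$ take over exactly the slots vacated by those of $p_{n-r}$, plus one new slot beyond the largest zero of $p_n$. This is $\mathcal{H}(n+1)$, after which (b) and (c) follow as you describe.
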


The interlacing property for the real zeros of $Q_{n}(z;N)$ follows straightforwardly from the latter result.

\begin{corollary}\label{cor:zero bounds} Let $Q_n(z;N)$ satisfy \eqref{eq:2} with $\gamma_{n-r}>0$ and suppose 
\begin{equation}\label{gamma lim}
	\lim_{n\to \infty, n/N\to t} \frac{\gamma_{n-r}}{N^\gamma} =  \alpha_k t^{\gamma}, \qquad  n \equiv k \bmod r. 
\end{equation}
If $\ n=m (r+1)+j$, with $j\in\mathbb{Z}_{r+1}$, then $Q_{n}(z)$ has $m$ positive real zeros $y_{n,j,s}$ and can be written as 
\begin{equation}\label{Qn factor}
	Q_n(z;N) = z^{j} \prod_{s=1}^m \left( z^{r+1} - (y_{n,j,s})^{r+1}  \right), \quad\text{with} \quad
	y_{n,j,s}(N)= N^{-\frac{\gamma}{r+1}}x_{n,j,s} \ . 
\end{equation}
 Moreover, all the zeros interlace and the largest zero 
$y_{n,j,m}$  satisfies 
\[
	|y_{n,j,m}(N)| \leq  \left( 1 + \frac{1}{r}\right) \left( \alpha\  r \ t^{\gamma} \right)^{1/(r+1)}+ o\left(t^{\gamma/(r+1)}\right) 
	\quad \text{as}\quad n\to+\infty, \ \text{and} \  \frac{n}{N}\to t, 
\]
where $\alpha=\max\limits_{k\in\mathbb{Z}_r} \alpha_k$. 
\end{corollary}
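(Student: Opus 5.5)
The corollary follows by transporting Proposition~\ref{prop:zeros} and Proposition~\ref{prop:max zero P} through the scaling \eqref{Q}. The scaling is a dilation of the variable combined with multiplication by a positive constant, so it preserves the location of zeros on $\Sigma$, their multiplicities, and their ordering.

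\emph{Step 1: factorization.} Write $n=m(r+1)+j$. By Proposition~\ref{prop:zeros}, $P_n$ has a zero of order $j$ at the origin and $m$ distinct positive zeros $x_{n,j,s}$. Each of these comes with its rotations $\omega^k x_{n,j,s}$, $k\in\mathbb{Z}_{r+1}$. This accounts for $j+m(r+1)=n$ zeros, so by monicity
\[
P_n(x)=x^j\prod_{s=1}^m\bigl(x^{r+1}-x_{n,j,s}^{r+1}\bigr).
\]
Substituting $x=N^{\gamma/(r+1)}z$ and multiplying by $N^{-n\gamma/(r+1)}=N^{-j\gamma/(r+1)}\prod_{s=1}^m N^{-\gamma}$ distributes one factor to each term. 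This gives \eqref{Qn factor} with $y_{n,j,s}=N^{-\gamma/(r+1)}x_{n,j,s}$. The same scaling shows that $Q_n(\cdot;N)$ satisfies \eqref{eq:2}, so the hypothesis on $\gamma_{n-r}>0$ is exactly the one needed in Proposition~\ref{prop:zeros}.

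\emph{Step 2: interlacing.} Multiplying all the zeros by the same positive number $N^{-\gamma/(r+1)}$, for fixed $N$, preserves every strict inequality in part (c) of Proposition~\ref{prop:zeros}. Hence the $y_{n,j,s}$ interlace in the same pattern.

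\emph{Step 3: bound on the largest zero.} Proposition~\ref{prop:max zero P} gives
\[
x_{n,j,m}\le \bigl(1+\tfrac1r\bigr)(\alpha r n^\gamma)^{1/(r+1)}+o(n^{\gamma/(r+1)}).
\]
Dividing by $N^{\gamma/(r+1)}$ yields
\[
y_{n,j,m}\le \bigl(1+\tfrac1r\bigr)\bigl(\alpha r (n/N)^\gamma\bigr)^{1/(r+1)}+o\bigl((n/N)^{\gamma/(r+1)}\bigr).
\]
As $n/N\to t$ we have $(n/N)^{\gamma/(r+1)}\to t^{\gamma/(r+1)}$, which gives the stated bound.

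The only point needing care is the error term. The $o(\cdot)$ here should be read as a quantity that tends to $0$ in the joint limit $n\to\infty$, $n/N\to t$. Since $n/N$ stays bounded, $o(n^{\gamma/(r+1)})/N^{\gamma/(r+1)}=o(1)\cdot (n/N)^{\gamma/(r+1)}$ is indeed negligible. One should also note that \eqref{gamma lim} is simply \eqref{eq:asper} rewritten as $\gamma_{n-r}/N^\gamma=(\gamma_{n-r}/n^\gamma)(n/N)^\gamma$, so Proposition~\ref{prop:max zero P} applies as stated. I expect no genuine obstacle beyond this bookkeeping.
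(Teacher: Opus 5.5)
Your proposal is correct and follows the paper's proof: you factor $P_n$ via Proposition~\ref{prop:zeros}, rescale by $N^{-\gamma/(r+1)}$ to obtain \eqref{Qn factor} and the interlacing, and divide the bound of Proposition~\ref{prop:max zero P} by $N^{\gamma/(r+1)}$. Your treatment of the error term, reading $o(t^{\gamma/(r+1)})$ as a quantity that vanishes in the joint limit $n\to\infty$, $n/N\to t$, spells out bookkeeping that the paper leaves implicit.
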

\begin{proof}
According to Proposition \ref{prop:zeros}, if  $P_n$ satisfies \eqref{eq:1} with $\gamma_n>0$, then it can be written as 
\[ 
	P_n(z) = z^{j} \prod_{s=1}^m \left( z^{r+1} - (x_{n,j,s})^{r+1}  \right), \quad n = (r+1)m+j, \ m\geq 0, \ j\in\mathbb{Z}_{r+1}. 
\]
Following the definition $Q_n(z;N)$ in \eqref{Q}, we obtain \eqref{Qn factor}. The interlacing property of the zeros follows directly from Proposition \ref{prop:zeros} and the majorization result for the largest zero in absolute value follows from Proposition \ref{prop:max zero P}. 
\end{proof}

\subsection{Proof of Theorem \ref{Thm1}}\label{Subsec Proof Thm1}

Suppose that \begin{equation}   \label{eq:3}
     \lim_{n \to \infty, n/N \to t}  \frac{Q_n(z,N)}{Q_{n+1}(z,N)} = \phi_k(z,t), \qquad n \equiv ( k-1) \bmod r,
\end{equation} holds, for \(k=1,2,\ldots,r\). Then dividing \eqref{eq:2} by $Q_{n+1}(z,N)$ gives
\begin{equation}\label{rec rel2}
 1 = z \frac{Q_n(z,N)}{Q_{n+1}(z,N)} - \frac{\gamma_{n-r}}{n^\gamma} \left(\frac{n}{N}\right)^\gamma \frac{Q_{n-r}(z,N)}{Q_{n+1}(z,N)}, 
\end{equation}
and if $n \equiv k\bmod r$ and $n \to \infty$ and $n/N \to t$, one finds
\begin{equation}  \label{eq:4}
     1 = z \phi_k(z,t) - \alpha_k t^\gamma \phi_k(z,t) \Phi(z,t),
\end{equation}
where
\[      \Phi(z,t) = \phi_1(z,t)\phi_2(z,t)\cdots \phi_{r}(z,t).   \]
From \eqref{eq:4} we find
\begin{equation}  \label{eq:5}
    \frac{1}{\phi_k(z,t)} = z - \alpha_k t^\gamma \Phi(z,t), \qquad 1 \leq k \leq r. 
\end{equation}
Multiplying these equations then gives \eqref{eq:6}. 
If we now define
\[   B_r(x) = \prod_{j=1}^{r} (x-\alpha_j), \]
then we find that $\Phi$ satisfies the equation
\begin{equation}   \label{eq:7}
   \frac{1}{\Phi(z,t)} = t^{r\gamma} \Phi^r(z,t)\  B_r\left(\frac{zt^{-\gamma}}{\Phi(z,t)} \right) .
\end{equation}

We still need to prove that there exist $r$ functions $\phi_1(z,t),\phi_2(z,t),\ldots,\phi_r(z,t)$ such that \eqref{eq:3} holds uniformly on compact sets of $\mathbb{C} \setminus \Sigma$.

We first prove that $ \left| \frac{Q_n(z;N)}{Q_{n+1}(z,N)} \right|$ is bounded for all $z$ on a compact set in $\mathbb{C} \setminus \Sigma$. 

\begin{lemma}\label{lem: ratio1} Let $K$ be a compact set in $\mathbb{C} \setminus \Sigma$ and $\delta$ the distance between $K$ and $\Sigma$:
\[   \delta = \min  \{ |x-y|, x \in K, y \in \Sigma \}.  \] 
Suppose $Q_n$ satisfies \eqref{eq:2} with $\gamma_n>0$ subject to \eqref{gamma lim}. Then,  if $y_{n,j,s}$ represent the positive zeros of $Q_n$, there exist positive $c_1(t), \ c_2(t)$ and  $M(t)$ such that 
\[ 
	c_1(t) \leq \frac{\gamma_{n-r}} {N^\gamma} \leq c_2 (t)
	\qquad \text{and}\qquad 
	\left| y_{n,j,s}\right|\leq M(t), 
\]
 and the following inequalities hold: 
\begin{enumerate}
\item[(a)]
\begin{align}
& \label{ratio r Qn}
		\left|\frac{Q_{n-r}(z,N)}{Q_{n+1}(z,N)}  \right|
	\leq  \frac{1}{\delta}
	\ \text{and}\ 
	 \left|\frac{ Q_n(z;N)}{Q_{n+1}(z,N)} \right|
	 \leq \frac{1}{\delta} \left( 1 + \frac{c_2(t)}{\delta}\right),  \quad\text{for all} \ z \in K. 
\end{align}

\item[(b)] 
\[
	\left| \frac{Q_{n-r}(z,N)}{Q_{n+1}(z,N)}  \right| > \frac{1}{2|z|^{r+1}} , \]
and 
\[ \left|\frac{ Q_n(z;N)}{Q_{n+1}(z,N)} \right| > \frac{1}{|z|}\left(   \frac{c_1(t)}{2\, |z|^{r+1}} -1\right) \]
for all $|z| > M(t)$.

\end{enumerate}
\end{lemma}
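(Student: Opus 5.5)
The plan is to use the factorized form of Corollary~\ref{cor:zero bounds}, the interlacing of Proposition~\ref{prop:zeros}, and the recurrence \eqref{eq:2} itself.

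\textbf{Uniform constants.} First obtain $c_1(t),c_2(t),M(t)$. By \eqref{gamma lim}, $\gamma_{n-r}/N^\gamma\to\alpha_k t^\gamma>0$ along each residue class. So for $n$ large with $n/N$ near $t$ these quotients lie between, say, $\tfrac12\min_k\alpha_k t^\gamma$ and $2\max_k\alpha_k t^\gamma$. The zero bound $M(t)$ is taken directly from Corollary~\ref{cor:zero bounds}, slightly enlarged to absorb the $o(\cdot)$ term.

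\textbf{Part (a), first inequality.} Write $n+1=(r+1)m+j$. Then $n-r=(r+1)(m-1)+j$, so $Q_{n+1}$ and $Q_{n-r}$ have the same multiplicity $j$ at the origin and $m$, respectively $m-1$, positive zeros. The key fact is that the positive zeros of $Q_{n-r}$ interlace with those of $Q_{n+1}$:
\[
y_{n+1,j,s}<y_{n-r,j,s}<y_{n+1,j,s+1}.
\]
I would derive this from the chain of interlacings in Proposition~\ref{prop:zeros}(c), applied through the intermediate degrees $n-r+1,\dots,n$. This gives a partial-fraction expansion in the variable $w=z^{r+1}$:
\[
\frac{Q_{n-r}(z)}{Q_{n+1}(z)}=\frac{\prod_{s=1}^{m-1}(w-a_s)}{\prod_{s=1}^{m}(w-b_s)}=\sum_{s=1}^m\frac{A_s}{w-b_s},
\]
with all $A_s>0$ by interlacing and $\sum_s A_s=1$ (compare leading coefficients). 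Since $Q_{n+1}(z)=\prod_{k=0}^{r}(z-\omega^k y)$-type factors, one has $w-b_s=\prod_k(z-\omega^k y_{n+1,j,s})$. I would rewrite the ratio as a convex combination of functions $1/\prod_k(z-\omega^k y_s)$ times $z^{j}/z^{j}$ cancellations, and bound each term. The bound $1/\delta$ (not $1/\delta^{r+1}$) suggests a sharper route. For that, I would exploit $\deg Q_{n+1}-\deg Q_{n-r}=r+1$ together with the star symmetry, to write the ratio as a positive combination of $1/(z^{r+1}-b_s)$. I would also check whether the intended estimate is really $\delta^{-(r+1)}$, or uses $\delta\le 1$-type normalization. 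This is the step I expect to be the main obstacle: getting the clean constant $1/\delta$ requires carefully choosing which partial-fraction representation to bound. If it fails, I would settle for a constant $C(\delta)$, which suffices for the normal-family argument.

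\textbf{Part (a), second inequality.} Divide \eqref{eq:2} by $Q_{n+1}$:
\[
\frac{zQ_n}{Q_{n+1}}=1+\frac{\gamma_{n-r}}{N^\gamma}\frac{Q_{n-r}}{Q_{n+1}}.
\]
Then use $|z|\ge\delta$ on $K$ (the origin lies in $\Sigma$), the first inequality, and $c_2(t)$ to get
\[
\left|\frac{Q_n}{Q_{n+1}}\right|\le\frac{1}{\delta}\left(1+\frac{c_2(t)}{\delta}\right).
\]

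\textbf{Part (b).} For $|z|>M(t)$, use the convex-combination representation $\sum A_s/(w-b_s)$ with $0<b_s\le M^{r+1}$. I would aim to show each $|w-b_s|\le 2|z|^{r+1}$, and then a lower bound for the positive combination. Real parts are not uniformly signed, so I would instead argue via the product form: the ratio equals $\prod_{s}(w-a_s)/\prod_s(w-b_s)$, and each factor ratio is controlled when $|w|$ dominates the zeros. This gives $|Q_{n-r}/Q_{n+1}|>1/(2|z|^{r+1})$, possibly after enlarging $M(t)$. Finally, the recurrence gives
\[
\left|\frac{zQ_n}{Q_{n+1}}\right|\ge\frac{\gamma_{n-r}}{N^\gamma}\left|\frac{Q_{n-r}}{Q_{n+1}}\right|-1\ge\frac{c_1(t)}{2|z|^{r+1}}-1,
\]
which yields the second inequality of (b).
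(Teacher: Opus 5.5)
Your overall plan (a positive partial-fraction expansion in $w=z^{r+1}$, normalisation of the residues, then the recurrence) is the paper's. However, the step that produces the positive expansion of $Q_{n-r}/Q_{n+1}$ does not work as you justify it. Consecutive interlacing from Proposition~\ref{prop:zeros} cannot be chained to give interlacing between degrees $n-r$ and $n+1$. Already for $r=1$, it places the $s$-th positive zeros of both $P_{2m-1}$ and $P_{2m+1}$ in the same gap $(x_{2m,s},x_{2m,s+1})$ of $P_{2m}$, and says nothing about their relative order.

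The missing ingredient is the recurrence, which you only use later. The paper expands the \emph{consecutive} ratio $zQ_n/Q_{n+1}=1+\sum_\ell A_\ell/(z^{r+1}-y_\ell^{r+1})$, whose residues are positive directly by Proposition~\ref{prop:zeros}(c). Then \eqref{eq:2} gives $\frac{\gamma_{n-r}}{N^\gamma}\frac{Q_{n-r}}{Q_{n+1}}=\sum_\ell A_\ell/(z^{r+1}-y_\ell^{r+1})$ with $\sum_\ell A_\ell=\gamma_{n-r}/N^\gamma$. Equivalently, at each zero $\zeta$ of $Q_{n+1}$ one has $\frac{\gamma_{n-r}}{N^\gamma}Q_{n-r}(\zeta)=\zeta Q_n(\zeta)$.

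On the constant, your suspicion is correct, so do not hunt for a sharper route. The paper obtains $1/\delta$ by tacitly using $|z^{r+1}-y^{r+1}|\ge\delta$, but only $\prod_k|z-\omega^k y|\ge\delta^{r+1}$ holds, and $1/\delta$ can genuinely fail. Suppose all positive zeros lie in $[0,M]$ and $z=\rho e^{\mathrm{i}\pi/(r+1)}$ with $\rho>M$. Then $z^{r+1}=-\rho^{r+1}$, all terms of the expansion have the same sign, and $|Q_{n-r}/Q_{n+1}|\ge(\rho^{r+1}+M^{r+1})^{-1}>\frac{1}{2\rho^{r+1}}$. This exceeds $1/\delta=1/(\rho\sin\frac{\pi}{r+1})$ as soon as $\rho^{r}<\frac12\sin\frac{\pi}{r+1}$, which can be arranged for small $t$, since $M(t)\to 0$. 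The correct bounds are $\delta^{-(r+1)}$ and $\frac1\delta\bigl(1+c_2(t)/\delta^{r+1}\bigr)$. These are all that the later normal-family and contraction arguments need.

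In part (b), the product-form argument as sketched does not give a bound uniform in $n$. There are $m-1\sim n/(r+1)$ factors $(w-a_s)/(w-b_s)$, each within only $O(M^{r+1}/|w|)$ of $1$. No fixed enlargement of $M$ controls their product unless you also use interlacing to bound the total displacement $\sum_s (a_s-b_s)\le M^{r+1}$, which you do not mention.

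The observation you are missing is that the real parts \emph{are} uniformly signed once you factor out $1/w$. For $0<u<|w|$, the quantity $\frac{w}{w-u}=\frac{1}{1-u/w}$ has real part $>\frac12$, because $\zeta\mapsto 1/(1-\zeta)$ maps the unit disk onto $\{\mathrm{Re}>\frac12\}$. Hence $\bigl|\sum_\ell A_\ell/(w-u_\ell)\bigr|\ge |w|^{-1}\sum_\ell A_\ell\,\mathrm{Re}\frac{1}{1-u_\ell/w}>\frac{1}{2|w|}\sum_\ell A_\ell$. This is exactly the paper's bound $|Q_{n-r}/Q_{n+1}|>\frac{1}{2|z|^{r+1}}$ for all $|z|>M$, with no enlargement needed. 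Your final step, passing to $Q_n/Q_{n+1}$ via the recurrence and $c_1(t)$, matches the paper.
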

 
\begin{proof}  
Let $K$ be a compact set in $\mathbb{C} \setminus \Sigma$ and $\delta$ the distance between $K$ and $\Sigma$:
\[   \delta = \min  \{ |x-y|, x \in K, y \in \Sigma \}.  \] 
Our strategy is to prove that 
\[  
 \left| \frac{Q_n(z;N)}{Q_{n+1}(z,N)} \right| \leq \delta^* , \qquad z \in K.  
\] 
The $(r+1)$-fold symmetry of the sequence $\{Q_n(z,N)\}$ means that there are $(r+1)$ sequences $\{Q_m^{[k]}(z,N)\}_{n\geq0}$ with $\deg Q_m^{[k]}(z,N) =m$ for each $k\in\mathbb{Z}_{r+1}$ such that 
\[
	Q_n(z,N) = z^k Q_m^{[k]}(z^{r+1},N), \quad \text{where } \quad n=(r+1)m+k. 
\]
Therefore, for any $n\geq 0$, the ratio 
\[ \frac{ z Q_n(z;N)}{Q_{n+1}(z,N)}\] 
only depends on $z^{p+1}$ and no other powers of $z$ are involved. Proposition \ref{prop:zeros} tells us that $Q_{m(r+1)+k}(z;N)$ has a zero of order $j$ at the origin, $m$ distinct zeros on the positive real line, and all the other $mr$ zeros are $\frac{2\pi}{r+1}$ rotations of the zeros on the positive real line. Precisely, if $z_{n,N,j}\neq 0$, for $0\leq j \leq m(r+1)$ is a zero of $Q_{m(r+1)+k}(z;N)$ we then have 
\[
	z_{n,N,j} = (y_{n,N,\ell})^{r+1} \quad \text{with} \quad   y_{n,N,\ell}\in \mathbb{R}_+ 
	\quad \text{and} \quad  0\leq \ell \leq m.
\]
This means we can consider the  partial fractions decomposition 
\begin{equation}\label{p frac Q}
	  \frac{ z Q_n(z;N)}{Q_{n+1}(z,N)} 
	  = 1+\sum_{\ell=1}^m \frac{A_{n+1,N,\ell}}{z^{r+1}-\left(y_{n+1,N,\ell}\right)^{r+1}}  ,  
\end{equation}
where $(n+1-m(r+1)) \in\mathbb{Z}_{r+1}$. The interlacing property of the zeros (described in Proposition  \ref{prop:zeros}) implies that $A_{n+1,N,\ell}$ in \eqref{p frac Q} are positive. On the other hand, from the recurrence relation \eqref{eq:2} we have 
\[
	 \frac{\gamma_{n-r}}{N^\gamma} \frac{Q_{n-r}(z,N)}{Q_{n+1}(z,N)}  
	 =  \sum_{\ell=1}^m \frac{A_{n+1,N,\ell}}{z^{r+1}-\left(y_{n+1,N,\ell}\right)^{r+1}} . 
\]
We take the limit as $z\to+\infty $ on both sides of the latter identity after multiplying it by $z^{r+1}$, to then obtain 
\begin{equation}\label{ident sum As}
 	 \frac{\gamma_{n-r}}{N^\gamma}	 =  \sum_{\ell=1}^m A_{n+1,N,\ell} . 
\end{equation}
Therefore we obtain \eqref{ratio r Qn} because 
\[
	\left|\frac{Q_{n-r}(z,N)}{Q_{n+1}(z,N)}  \right|
	=\left|  \frac{N^\gamma} {\gamma_{n-r}}  \sum_{\ell=1}^m \frac{A_{n+1,N,\ell}}{z^{r+1}-\left(y_{n+1,N,\ell}\right)^{r+1}} \right|
	\leq \frac{1}{\delta}  \left| \frac{N^\gamma} {\gamma_{n-r}} \right|  \left| \sum_{\ell=1}^m A_{n+1,N,\ell} \right|
	= \frac{1}{\delta} .
\]
The boundedness of $\frac{\gamma_{n-r}}{N^\gamma}$ implies 
\[
	\left|  \sum_{\ell=1}^m A_{n+1,N,\ell} \right| \leq c_2.
\]
For $z\in K$, the left hand side of \eqref{p frac Q} satisfies 
\begin{equation}
	\delta  \left|\frac{ Q_n(z;N)}{Q_{n+1}(z,N)} \right| \leq   \left|\frac{z Q_n(z;N)}{Q_{n+1}(z,N)} \right| , 
\end{equation}
whilst the right hand side is such that 
\[
	\left|  1+\sum_{\ell=1}^m \frac{A_{n+1,N,\ell}}{z^{r+1}-\left(y_{n+1,N,\ell}\right)^{r+1}}  \right|
	\leq  1 + 	\frac{\left|  \sum_{\ell=1}^m A_{n+1,N,\ell} \right|}{\delta} 
	\leq 1+ \frac{c_2}{\delta}. 
\]
As a result, we obtain \begin{equation}\label{ratio bou}
	 \left|\frac{ Q_n(z;N)}{Q_{n+1}(z,N)} \right|
	 \leq \frac{1}{\delta} \left( 1 + \frac{c_2}{\delta}\right).  
\end{equation}

In order to prove part (b), we start by observing that if $|z|>M=\max\limits_{n\in\mathbb{N}} (y_{n,j,m})$, then $|y_{n,j,s}/z|<1$ so that $\mathfrak{R}\left( \frac{1}{1-y_{n,j,s}/z}\right)>\frac{1}{2}$ for 
$1\leq s\leq m$ and $j\in\mathbb{Z}_{r+1}$. Therefore, 
\begin{align*}
	 \left|\frac{Q_{n-r}(z,N)}{Q_{n+1}(z,N)}  \right|
	&=\left|  \frac{N^\gamma} {\gamma_{n-r}}  \sum_{\ell=1}^m \frac{A_{n+1,N,\ell}}{z^{r+1}-\left(y_{n+1,N,\ell}\right)^{r+1}} \right| \\
	&=\frac{N^\gamma} {\gamma_{n-r}} \frac{1}{|z|^{r+1}}  \left|  \sum_{\ell=1}^m \frac{A_{n+1,N,\ell}}{1-\left(y_{n+1,N,\ell}/z\right)^{r+1}} \right| \\
	&\geq \frac{N^\gamma} {\gamma_{n-r}} \frac{1}{|z|^{r+1}}   \sum_{\ell=1}^m\mathfrak{R}  \left(  \frac{A_{n+1,N,\ell}}{1-\left(y_{n+1,N,\ell}/z\right)^{r+1}} \right) \\
	 &> \frac{N^\gamma} {\gamma_{n-r}} \frac{1}{2\, |z|^{r+1}}   \sum_{\ell=1}^m  A_{n+1,N,\ell} 
	  =   \frac{1}{2\, |z|^{r+1}}, 
\end{align*}
where, for the last identity, we have used \eqref{ident sum As}. 

Finally, the second inequality in part (b) follows from 
\begin{align*}
	  \frac{1}{2\, |z|^{r+1}} &<  \left|\frac{Q_{n-r}(z,N)}{Q_{n+1}(z,N)}  \right| \\
    &=\left| \left(  \frac{z\, Q_{n}(z,N)}{Q_{n+1}(z,N)}  -1\right)  \frac{N^\gamma} {\gamma_{n-r}} \right| \\
	 &\leq \left( \left|  \frac{z\, Q_{n}(z,N)}{Q_{n+1}(z,N)}\right| +1 \right) \left| \frac{N^\gamma} {\gamma_{n-r}} \right|\\[0.2cm]
	&\leq \left( \left|  \frac{z\, Q_{n}(z,N)}{Q_{n+1}(z,N)}\right| +1 \right) \frac{1}{c_1} . 
\end{align*}
\end{proof}

We can then conclude that 
\[   \left( \frac{Q_{mr+k}(z)}{Q_{mr+k+1}(z;N)} \right)_m   \]
is a normal family on $\mathbb{C} \setminus \Sigma$. By Montel's theorem for $k=1$, there is a subsequence that converges to some function $\phi_1(z,t)$, uniformly
on compact sets of $\mathbb{C} \setminus \Sigma$.

 There is also a further subsequence that converges for $k=2$ to $\phi_2(z,t)$, and we can do this for every $k$ with $1 \leq k \leq r$. We conclude that there is a subsequence for which there are functions $\phi_1,\ldots,\phi_r$ such that
\begin{equation}  \label{eq:14}
   \lim_{m \to \infty, mr/N \to t} \frac{Q_{mr+k}(z,N)}{Q_{mr+k+1}(z,N)} = \phi_k(z,t), 
\end{equation}
uniformly on compact subsets of $\mathbb{C} \setminus \Sigma$. 

These $\phi_1,\ldots,\phi_r$ may depend on the subsequence, so we need to show that they do not depend on the subsequence. In order to do so, we use the following two auxiliary results. 

\begin{lemma}\label{lem:aux1}
Suppose $(a_n)_{n\geq 0}$ is a positive sequence for which
\begin{equation}  \label{ineq1}
    a_{n+1} \leq c_n + a \sum_{k=0}^{r-1} a_{n-k}, \qquad  n \geq 0, 
\end{equation}    
where $r$ is an integer $\geq 1$, $0 < a \leq 1$ and $(c_n)_{n \geq 0}$ a positive sequence. Then one has
\begin{equation}  \label{ineq2}
     a_{n+1} \leq   a_0 A^{n+1} + \sum_{k=0}^n c_k A^{n-k}, 
\end{equation}
where $A = 2 a^{1/r}$.
\end{lemma}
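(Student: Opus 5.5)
The plan is a strong induction with an explicit majorant. I would use the natural convention $a_{m}=0$ for $m<0$, which is implicit in \eqref{ineq1} for small $n$. Define the comparison sequence
\[
   b_0 = a_0, \qquad b_{n+1} = A\, b_n + c_n \quad (n\geq 0), \qquad b_m = 0 \ (m<0).
\]
Unrolling this linear recursion gives exactly $b_{n+1} = a_0 A^{n+1} + \sum_{k=0}^n c_k A^{n-k}$, the right-hand side of \eqref{ineq2}. So it suffices to show $a_n \leq b_n$ for all $n\geq 0$.

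I would argue by strong induction. The case $n=0$ is trivial. Assume $a_m \leq b_m$ for all $m\leq n$. Then \eqref{ineq1} gives
\[
   a_{n+1} \leq c_n + a \sum_{j=0}^{r-1} b_{n-j},
\]
and it is enough to show that $a\sum_{j=0}^{r-1} b_{n-j} \leq A\, b_n$.

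The key observation is that the $b_m$ are nonnegative, since $a_0, c_k, A>0$. Iterating the recursion $j$ times therefore gives $b_n = A^j b_{n-j} + (\text{nonnegative terms}) \geq A^j b_{n-j}$ whenever $0\le j\le n$. For $j>n$ the inequality $b_{n-j}\le A^{-j}b_n$ holds trivially, because $b_{n-j}=0$. Hence
\[
   a\sum_{j=0}^{r-1} b_{n-j} \leq b_n \sum_{j=0}^{r-1} a A^{-j} = A\, b_n \sum_{j=0}^{r-1} a A^{-(j+1)},
\]
and the induction step reduces to the numerical inequality $\sum_{j=0}^{r-1} aA^{-(j+1)}\leq 1$.

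This numerical inequality is the only place where the specific choice $A=2a^{1/r}$ matters. Substituting it,
\[
   aA^{-(j+1)} = 2^{-(j+1)}\, a^{1-(j+1)/r} \leq 2^{-(j+1)},
\]
because $0\le 1-(j+1)/r$ for $0\le j\le r-1$ and $0<a\leq 1$. Summing gives $\sum_{j=0}^{r-1}2^{-(j+1)} = 1-2^{-r} < 1$, which closes the induction and proves \eqref{ineq2}.

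I expect no real obstacle beyond two points: finding the right majorant, namely recognizing that $A$ must satisfy $\sum_{j=1}^{r} aA^{-j}\le 1$, and handling the negative indices through the zero convention.
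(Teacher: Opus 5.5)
Your proof is correct and follows the same strategy as the paper: induction on $n$, closed by the numerical inequality $a\sum_{k=0}^{r-1}A^{-k}\le A$ for $A=2a^{1/r}$. You organize it more cleanly than the paper does, using the majorant $b_{n+1}=Ab_n+c_n$ and the bound $b_{n-j}\le A^{-j}b_n$ in place of the paper's regrouping of the double sum into two index ranges, and you also spell out the verification of that inequality and the zero convention for negative indices, both of which the paper leaves implicit.
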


\begin{proof}
We will use induction on $n$. For $n=0$ we have from \eqref{ineq1}
\[    a_1 \leq c_0 + a a_0.  \]
Now use the fact that $a \leq a^{1/r} < 2a^{1/r}$ whenever $0 < a \leq 1$ to find that $a_1 \leq c_0+ a_0 A$, which is \eqref{ineq2} for $n=0$.

Now assume \eqref{ineq2} is true up to $n-1$. Then from \eqref{ineq1} we find
\[     a_{n+1} \leq  c_n + a \sum_{k=0}^{r-1} \left(  a_0 A^{n-k} + \sum_{j=0}^{n-k-1} c_j A^{n-k-j-1} \right).  \]
Now observe that 
\begin{equation}  \label{aA}
       a \sum_{k=0}^{r-1} A^{-k} = a\  \frac{\frac{1}{A^r}-1}{\frac{1}{A}-1} \leq A, \qquad  \textrm{if } 0 < a \leq 1. 
\end{equation}
Hence
\[    a  a_0  \sum_{k=0}^{r-1} A^{n-k} \leq a_0 A^{n+1}. \]
Furthermore
\[     a \sum_{k=0}^{r-1} \sum_{j=0}^{n-k-1} c_j A^{n-k-j-1}  = a \sum_{j=0}^{n-r} c_j A^{n-j-1}  \sum_{k=0}^{r-1} A^{-k} 
+ a \sum_{j=n-r+1}^{n-1} c_j A^{n-j-1} \sum_{k=0}^{n-j-1} A^{-k} .  \]
By using \eqref{aA} we see that for the first term on the right
\[   a \sum_{j=0}^{n-r} c_j A^{n-j-1}  \sum_{k=0}^{r-1} A^{-k} \leq  \sum_{j=0}^{n-r} c_j A^{n-j}.  \]
In the second sum on the right we have that $j \geq n-r+1$ so that
\[        \sum_{k=0}^{n-j-1} A^{-k} \leq  \sum_{k=0}^{r-1} A^{-k}, \]  
and hence, by using \eqref{aA} again,
\[     a \sum_{j=n-r+1}^{n-1} c_j A^{n-j-1} \sum_{k=0}^{n-j-1} A^{-k} \leq  \sum_{j=n-r+1}^{n-1}  c_j A^{n-j}.  \]
Combining this gives the required inequality \eqref{ineq2}.
\end{proof}

\begin{corollary}\label{cor:aux1}
Suppose $a < 2^{-r}$ and $(c_n)_{n \geq 0}$ is a positive sequence for which $\lim_{n \to \infty} c_n = 0$. If
\[    0 \leq  a_{n+1} \leq c_n + a \sum_{k=0}^{r-1} a_{n-k}, \qquad  n \geq 0,  \]
then $\lim_{n \to \infty} a_n= 0$.  
\end{corollary}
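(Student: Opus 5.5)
The plan is to apply Lemma~\ref{lem:aux1} directly. Since $a<2^{-r}\le 1$, the lemma is applicable, and it gives
\[
   a_{n+1} \leq a_0 A^{n+1} + \sum_{k=0}^n c_k A^{n-k}, \qquad A = 2a^{1/r}.
\]
The hypothesis $a<2^{-r}$ is exactly what makes $A<1$, because $a^{1/r}<1/2$. The term $a_0A^{n+1}$ therefore tends to zero, and the whole question reduces to showing that the convolution $\sum_{k=0}^n c_k A^{n-k}$ tends to zero whenever $c_n\to 0$ and $0<A<1$. This is a standard Toeplitz-type argument.

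For the convolution I would argue as follows. Let $\varepsilon>0$ and choose $K$ so that $c_k<\varepsilon$ for all $k> K$. Split the sum at $K$:
\[
   \sum_{k=0}^n c_k A^{n-k} \leq A^{n-K}\sum_{k=0}^{K} c_k + \varepsilon \sum_{j=0}^{\infty} A^{j} = A^{n-K} C_K + \frac{\varepsilon}{1-A},
\]
where $C_K=\sum_{k\le K} c_k$ is a fixed finite number. For $n$ large the first term is below $\varepsilon$. Hence $\limsup_{n\to\infty} a_n \le \varepsilon\bigl(1+\frac{1}{1-A}\bigr)$.

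Since $\varepsilon$ is arbitrary and $a_n\ge 0$, this gives $\lim_{n\to\infty} a_n=0$.

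One technical detail needs care. Lemma~\ref{lem:aux1} is stated for a positive sequence $(a_n)$, and it implicitly involves the terms $a_{n-k}$ with negative index for small $n$. These should be interpreted as zero, or the recursion should be started after the first $r$ terms, with the initial terms absorbed into $a_0$ and the $c_n$. Zero entries do not affect the inductive proof, which only uses $a_n\ge0$, so nonnegativity suffices. No step is a genuine obstacle: the only substantive point is checking that $a<2^{-r}$ yields $A<1$, and the rest is the elementary $\varepsilon$-splitting above.
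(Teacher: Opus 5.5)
Your proposal is correct and follows the paper's proof: you invoke Lemma~\ref{lem:aux1}, observe that $a<2^{-r}$ gives $A=2a^{1/r}<1$, and conclude from $\sum_{k=0}^n c_kA^{n-k}\to 0$, which you justify with the standard $\varepsilon$-splitting that the paper only asserts "follows easily." One small point, implicit in the paper as well, is that the lemma needs $a>0$; if $a\le 0$ the claim is immediate, since then $0\le a_{n+1}\le c_n$.
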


\begin{proof}
We use the previous lemma, which gives
\[   0 \leq a_{n+1} \leq a_0 A^{n+1} + \sum_{k=0}^n c_k A^{n-k} ,  \]
with $A < 1$. Then the result follows easily because
\[   \lim_{n \to \infty} \sum_{k=0}^n c_k A^{n-k} = 0 .  \]
\end{proof}

We use the two last results to prove the following: 
\begin{lemma}\label{lemma independence of limits} Under the same assumptions as in Lemma \ref{lem: ratio1}, then  
\[ 
	\lim_{n\to \infty, mr/N \to t}\left| \frac{Q_{n}(z;N)}{Q_{n+1}(z;N)} - \frac{Q_{n+r}(z;N)}{Q_{n+1+r}(z;N)} \right| = 0, 
\] 
uniformly on compact subsets $K\subset \mathbb{C} \backslash \Sigma$.
\end{lemma}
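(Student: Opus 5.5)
The plan is to derive from the recurrence \eqref{eq:2} a linear recursive inequality for the differences
\[
a_n(z):=\Bigl|\tfrac{Q_{n}(z;N)}{Q_{n+1}(z;N)}-\tfrac{Q_{n+r}(z;N)}{Q_{n+r+1}(z;N)}\Bigr|,
\]
apply Lemma \ref{lem:aux1} or Corollary \ref{cor:aux1} for $|z|$ large, and then propagate the conclusion to arbitrary compact sets by normality.

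Write $R_n=Q_n/Q_{n+1}$ and $g_n=\gamma_{n-r}/N^\gamma$. Since $Q_{n-r}/Q_n=R_{n-r}R_{n-r+1}\cdots R_{n-1}$, dividing \eqref{eq:2} by $Q_{n+1}$ gives
\[
\frac{1}{R_n}=z-g_n\prod_{i=1}^{r}R_{n-i},
\qquad\text{hence}\qquad
R_{n+r}-R_n=R_nR_{n+r}\Bigl(g_{n+r}\prod_{i=1}^rR_{n+r-i}-g_n\prod_{i=1}^rR_{n-i}\Bigr).
\]
I will expand the difference of products telescopically, replacing one factor at a time. This gives
\[
a_n\le |R_nR_{n+r}|\,|g_{n+r}-g_n|\prod_{i=1}^r|R_{n+r-i}|+|R_nR_{n+r}|\,g_n\sum_{i=1}^r\Bigl(\prod_{\text{other factors}}|R_\cdot|\Bigr)\,a_{n-i}.
\]
The indices $n$ and $n+r$ lie in the same residue class mod $r$, and $(n+r)/N\to t$ as well, so \eqref{gamma lim} gives $|g_{n+r}-g_n|\to0$. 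Together with the bounds of Lemma \ref{lem: ratio1}, this shows the inequality has the shape $a_n\le c_n+C\sum_{i=1}^r a_{n-i}$ with $c_n\to0$.

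The main obstacle is that Corollary \ref{cor:aux1} needs $C<2^{-r}$, and Lemma \ref{lem: ratio1}(a) only gives $C$ bounded, not small. I will therefore first work on $\{|z|\ge R\}$ for a large $R$. I will redo the partial-fraction argument of Lemma \ref{lem: ratio1} using $\bigl|z^{r+1}-y^{r+1}\bigr|\ge |z|^{r+1}-M(t)^{r+1}$ and \eqref{ident sum As}. This gives $|zR_n|\le 1+c_2/(|z|^{r+1}-M^{r+1})$, so $|R_n|=O(1/|z|)$ uniformly in $n$ with $n/N$ near $t$. Then $C=O(c_2|z|^{-(r+1)})$, which is below $2^{-r}$ once $R$ is large.

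A second issue is that the limit is a double limit in $n$ and $N$, while $c_n$ is small only for $n/N$ near $t$. So rather than starting the recursion at $0$, I will apply Lemma \ref{lem:aux1} on a window of length $L$ ending at $n$, starting at index $n-L$. This yields
\[
a_n\le a_{n-L}A^{L}+\frac{1}{1-A}\sup_{n-L\le k\le n}c_k .
\]
Here $a_{n-L}$ is bounded by Lemma \ref{lem: ratio1} and $A<1$. I will first choose $L$ so that the first term is below $\varepsilon$; since $L/N\to0$, every $k$ in the window still has $k/N\to t$, so the supremum tends to $0$. This proves the claim uniformly on $\{|z|\ge R\}\setminus$(a neighbourhood of $\Sigma$).

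To pass to an arbitrary compact $K\subset\mathbb{C}\setminus\Sigma$, note that the functions $R_n-R_{n+r}$ are uniformly bounded on compacts by Lemma \ref{lem: ratio1}(a), so they form a normal family. Any subsequential limit is analytic on $\mathbb{C}\setminus\Sigma$ and vanishes on the part of each of the $r+1$ open sectors where $|z|>R$. Each sector is connected, so by the identity theorem the limit vanishes on every sector. Since every subsequence has a further subsequence converging to $0$, the whole family converges to $0$ uniformly on $K$.
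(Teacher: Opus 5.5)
Your proposal is correct and follows the paper's argument. Both derive an order-$r$ linear recursive inequality for the differences, make its coefficient smaller than $2^{-r}$ by working far out (large $\delta=\mathrm{dist}(K^\star,\Sigma)$ in the paper, large $|z|$ via $|R_n|=O(1/|z|)$ for you), apply Lemma~\ref{lem:aux1}/Corollary~\ref{cor:aux1}, and pass to arbitrary compacts by a Vitali/normality argument.

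The differences are minor:
\begin{enumerate}
\item You work with $R_n-R_{n+r}$ directly by telescoping the product, whereas the paper works with $D_n=Q_n/Q_{n+r}-Q_{n-1}/Q_{n+r-1}$.
\item Your sliding window makes the double limit $n/N\to t$ explicit. In it, $a_{n-L}$ should be replaced by the maximum of the $r$ starting values, since Lemma~\ref{lem:aux1} as stated assumes zero pre-initial terms.
\item Your sector-by-sector identity-theorem step explicitly covers all $r+1$ components of $\mathbb{C}\setminus\Sigma$.
\end{enumerate}
The paper leaves the last two points implicit.
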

\begin{proof}
From the recurrence relation \eqref{eq:2}, we have 
\[
	z = \frac{Q_{n+1}(z;N)}{Q_{n}(z;N)} 
		+ \frac{\gamma_{n-r}}{N^\gamma} \frac{Q_{n-r}(z;N)}{Q_{n}(z;N)} . 
\]
The same relation with $n$ replaced by $n+r$ reads as 
\[
	z = \frac{Q_{n+r+1}(z;N)}{Q_{n+r}(z;N)} 
		+ \frac{\gamma_{n}}{N^\gamma} \frac{Q_{n}(z;N)}{Q_{n+r}(z;N)} . 
\]
Subtracting the former from the latter leads to
\begin{align*}
&	 \frac{Q_{n+1}(z;N)}{Q_{n}(z;N)}  - \frac{Q_{n+r+1}(z;N)}{Q_{n+r}(z;N)} \\
&	 =  \frac{\gamma_{n}}{N^\gamma} \left(  \frac{Q_{n}(z;N)}{Q_{n+r}(z;N)} - \frac{Q_{n-r}(z;N)}{Q_{n}(z;N)}\right) 
	 + \left(   \frac{\gamma_{n}}{N^\gamma} - \frac{\gamma_{n-r}}{N^\gamma}  \right) \frac{Q_{n-r}(z;N)}{Q_{n}(z;N)}
\end{align*}
which is the same as 
\begin{align*}
	& \frac{Q_{n+r+1}(z;N)}{Q_{n}(z;N)} \left( \frac{Q_{n+1}(z;N)}{Q_{n+r+1}(z;N)}  -   \frac{Q_{n}(z;N)}{Q_{n+r}(z;N)}  \right) \\
	 &  = \frac{\gamma_{n}}{N^\gamma} \left(  \frac{Q_{n}(z;N)}{Q_{n+r}(z;N)} - \frac{Q_{n-r}(z;N)}{Q_{n}(z;N)}\right) 
	 + \left(   \frac{\gamma_{n}}{N^\gamma} - \frac{\gamma_{n-r}}{N^\gamma}  \right) \frac{Q_{n-r}(z;N)}{Q_{n}(z;N)}. 
\end{align*}
If we set 
\[
	D_{n}(z;N) = \frac{Q_{n}(z;N)}{Q_{n+r}(z;N)}  -   \frac{Q_{n-1}(z;N)}{Q_{n-1+r}(z;N)} , 
\]
then we obtain the following $r$th order recurrence relation for $D_n(z;N)$
\begin{align*}
	&D_{n+1} (z;N)\\
	 &  =   \frac{Q_{n}(z;N)}{Q_{n+r+1}(z;N)}\frac{\gamma_{n}}{N^\gamma} \left(  \sum_{\ell = 0}^{r-1} D_{n-\ell} (z;N) \right) 
	 + \left(   \frac{\gamma_{n}}{N^\gamma} - \frac{\gamma_{n-r}}{N^\gamma}  \right) \frac{Q_{n-r}(z;N)}{Q_{n+r+1}(z;N)}, 
\end{align*}
subject to the initial conditions $D_j(z;N)=0$ for negative $j$ and $D_0=z^{-r}$.
\[ D_j(z;N) = z^{j-1}  \left( \frac{z}{Q_{j+r}(z;N)}  -   \frac{1}{Q_{j-1+r}(z;N)}\right) = \frac{\gamma_j }{N^\gamma }\frac{z^{2j-1}}{Q_{j-1+r}(z;N)Q_{j+r}(z;N)}  \]
 for $j=1,\ldots , r$. 

Using \eqref{ratio r Qn}, it follows 
\begin{align}\label{Dn ineq}
	\left| D_{n+1}(z;N) \right| 
		& \leq
		C_n+  \frac{c_2(t)}{\delta} \sum_{\ell = 0} ^{r-1}  \left| D_{n-\ell} (z;N) \right| , 
\end{align}
with 
\begin{equation*}
	C_{n} =\left|  \frac{\gamma_{n}}{N^\gamma} - \frac{\gamma_{n-r}}{N^\gamma}  \right| 
				\frac{1}{\delta^{r+1}}  \left( 1 + \frac{c_2(t)}{\delta}\right)^r, \ n\geq 0. 
\end{equation*}
Under the assumption that \eqref{gamma lim} holds uniformly, then this implies that for every $\delta>0$
\[
	\lim_{\substack{n\to\infty\\ n/N\to t}} C_n =0. 
\]
If we choose a compact set $K^\star$ so that $\delta$ is large enough in order to ensure that $ \frac{c_2(t)}{\delta} < 2^{-r} $. Therefore Lemma \ref{lem:aux1} and Corollary \ref{cor:aux1} can be applied to conclude that 
\[ 
|D_{n+1}(z;N)|\to 0 \quad \text{as}\quad n\to \infty
\] 
uniformly for $z\in K^\star$. 

Besides, 
\[
	|D_{n+1} | 
		\leq 2 \ \left(\frac{1}{\delta} \left( 1 + \frac{c_2(t)}{\delta}\right)\right)^{r}
\]
on any compact $K\subset \mathbb{C}\backslash \Sigma$. By Vitali's theorem, we can conclude that $|D_{n+1}|$ converges uniformly to $0$ on every compact set $K\subset \mathbb{C}\backslash \Sigma$. 
\end{proof}

Hence, we conclude that $Q_{mr+k}(z,N)/Q_{mr+k+1}(z,N)$ converges to $\phi_k$ along a subsequence $m=m_\ell$, $\ell \to \infty$,
then also
\[  \lim_{m=m_\ell \to \infty} \frac{Q_{(m+1)r+k}(z,N)}{Q_{(m+1)r+k+1}(z,N)} = \phi_k(z,t).  \]

From here we can do the reasoning leading to \eqref{eq:3}--\eqref{eq:4}, and hence $\Phi=\phi_1\cdots\phi_r$
is the solution of \eqref{eq:7} which behaves like $1/z^r$ as $z \to \infty$, and thus every convergent subsequence has the same limits $\phi_1,\ldots,\phi_r$, which are obtained from $\Phi$ by using \eqref{eq:5}. That implies that the full sequence converges and \eqref{eq:3} holds.

\subsection{Properties of the ratio asymptotics}\label{Subsec: Phi}

Observe that \eqref{eq:7} is an algebraic equation of order $r+1$ for the function $\Phi$. 
We need the solution for which $\Phi(z,t) \sim z^{-r}$ as $z \to \infty$.
The equation also has $r$ other solutions for which $z/\Phi \sim \alpha_j t^\gamma$ as $z \to \infty$, for $1 \leq j \leq r$. 
Once we know what $\Phi$ is, we can find every $\phi_k$ from \eqref{eq:5}.
The function $\Phi$ depends on two variables $z$ and $t$. We can remove the dependence on $t$ by the following property.

\begin{proposition}  \label{prop:1}
The function $\Phi$ satisfies
\begin{equation}   \label{eq:8}
    \Phi(z,t) = {t}^{-r\nu} \Phi( z {t}^{-\nu},1), \qquad\text{with}\qquad \nu = \frac{\gamma}{r+1} , 
\end{equation}
as well as 
\begin{equation}\label{eq:sym}
	\Phi(z,t) =\omega^{-k}\Phi(\omega^k z,t) ,\quad \text{for}\quad k=0\ldots r, \quad\text{when}\quad \omega\in\mathbb{C}:\quad \omega^{r+1}=1 
\end{equation}
and 
\begin{equation}\label{eq:conj phi}
	\overline{\Phi(\overline{z},t)} = \Phi(z,t). 
\end{equation}

\end{proposition}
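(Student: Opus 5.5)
The plan is to verify each identity at the level of the algebraic equation \eqref{eq:6} and then invoke uniqueness: among the $r+1$ branches solving \eqref{eq:6}, $\Phi(\cdot,t)$ is the unique one with $z^r\Phi(z,t)\to 1$ as $z\to\infty$. This is the distinguished branch singled out in Theorem \ref{Thm1}; the other $r$ branches satisfy $z/\Phi\to\alpha_j t^\gamma$. So for each identity we build a candidate function, check that it solves \eqref{eq:6} with the same $t$, check its behaviour at infinity, and conclude it equals $\Phi$. We may also use that $\Phi$ is the locally uniform limit of ratios $Q_n/Q_{n+r}$ on $\mathbb{C}\setminus\Sigma$. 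That gives the identities directly from properties of $Q_n$, and the algebraic route serves as a check.

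\textbf{Scaling \eqref{eq:8}.} Put $\Psi(z)=t^{-r\nu}\Phi(zt^{-\nu},1)$ with $\nu=\gamma/(r+1)$. Substituting into the right side of \eqref{eq:6} gives
\[
\Psi\prod_{j=1}^r\bigl(z-\alpha_jt^\gamma\Psi\bigr)=t^{-r\nu}\Phi(w,1)\,t^{r\nu}\prod_{j=1}^r\bigl(w-\alpha_j\Phi(w,1)\bigr),
\]
where $w=zt^{-\nu}$. Here we used $t^\gamma t^{-r\nu}=t^{\nu}$, which holds because $\gamma-r\nu=\nu$. The product equals $1$ since $\Phi(\cdot,1)$ solves \eqref{eq:6} at $t=1$. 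At infinity, $z^r\Psi(z)=w^r\Phi(w,1)\to 1$, so $\Psi=\Phi(\cdot,t)$.

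\textbf{Rotation \eqref{eq:sym}.} Set $\Psi(z)=\omega^{-k}\Phi(\omega^kz,t)$. Since $\omega^{-k}\cdot\omega^{-rk}=\omega^{-(r+1)k}=1$,
\[
\Psi\prod_{j=1}^r\bigl(z-\alpha_jt^\gamma\Psi\bigr)=\omega^{-k}\omega^{-rk}\,\Phi(\omega^kz,t)\prod_{j=1}^r\bigl(\omega^kz-\alpha_jt^\gamma\Phi(\omega^kz,t)\bigr)=1 .
\]
Also $z^r\Psi(z)=\omega^{-(r+1)k}(\omega^kz)^r\Phi(\omega^kz,t)\to1$. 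The map $z\mapsto\omega^kz$ preserves $\Sigma$, so $\Psi$ is analytic on the same domain, and $\Psi=\Phi$. Alternatively, $Q_n(\omega z)=\omega^nQ_n(z)$ gives $Q_n(\omega z)/Q_{n+r}(\omega z)=\omega^{-r}Q_n(z)/Q_{n+r}(z)=\omega\,Q_n(z)/Q_{n+r}(z)$, and passing to the limit in \eqref{eq:ratio} yields $\Phi(\omega z,t)=\omega\Phi(z,t)$ at once.

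\textbf{Conjugation \eqref{eq:conj phi}.} Take $\Psi(z)=\overline{\Phi(\overline z,t)}$. Conjugating \eqref{eq:6} at $\overline z$ shows $\Psi$ solves it, because the $\alpha_j$ and $t$ are real, and $z^r\Psi\to1$. Equivalently, the $Q_n$ have real coefficients and $\Sigma$ is symmetric under conjugation, so each ratio $Q_n/Q_{n+r}$ satisfies this symmetry, and it passes to the limit.

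\textbf{Main obstacle.} The delicate point is the uniqueness claim: that $\Psi$ and $\Phi$ coincide on all of $\mathbb{C}\setminus\Sigma$, not just near infinity. We handle it as follows. Near $\infty$ the $r+1$ branches of \eqref{eq:6} have distinct asymptotics, so they are separated there, and the two functions agree on a neighbourhood of $\infty$. Both are analytic on the connected domain $\mathbb{C}\setminus\Sigma$, so the identity theorem extends the equality to the whole domain. For the scaling identity, analyticity of $\Psi$ needs $t^{\nu}\Sigma=\Sigma$, which holds for $t>0$. If this step becomes awkward, the limit-of-ratios argument avoids it entirely: \eqref{eq:8} follows from $Q_n(z,N)=(N'/N)^{n\nu}Q_n(z(N/N')^{\nu},N')$ with $n/N\to t$ and $n/N'\to1$.
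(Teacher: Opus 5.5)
Your proposal is correct and follows the paper's proof. Like the paper, you verify that each candidate function solves the algebraic equation (the paper uses the equivalent form \eqref{eq:7}) and has the normalization $z^r\Phi\to 1$, and then conclude by uniqueness of that branch. Your limit-of-ratios alternatives are also valid: they use $Q_n(\omega z)=\omega^nQ_n(z)$, the real coefficients of $Q_n$, and the rescaling identity between $Q_n(\cdot,N)$ and $Q_n(\cdot,N')$. Given Theorem \ref{Thm1}, they have the advantage of bypassing the uniqueness step entirely.

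One correction to your ``main obstacle'' paragraph: $\mathbb{C}\setminus\Sigma$ is not connected; it is the union of $r+1$ disjoint open sectors. The argument still works because every sector is unbounded. For large $|z|$ the distinguished root is the only small root, so the two functions agree pointwise there inside each sector, and the identity theorem can be applied sector by sector. This holds even when some $\alpha_j$ coincide, in which case your claim that all $r+1$ branches have distinct asymptotics fails; only the separation of the distinguished branch from the others is needed.
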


\begin{proof}
Use \eqref{eq:7} with $t=1$ to find
\begin{equation}\label{eq Phi1}
    \frac{1}{\Phi(z,1)} =  \Phi^r(z,1) B_r \left( \frac{z}{\Phi(z,1)} \right). 
\end{equation}
Now change $z$ to $zt^{-\nu}$, then
\[    \frac{1}{\Phi(zt^{-\nu},1)} =  \Phi^r(zt^{-\nu},1) B_r \left( \frac{zt^{-\nu}}{\Phi(zt^{-\nu},1)} \right). \]
Multiply both sides of the equation by $t^{r\nu}$ to find
\[ \frac{1}{t^{-r\nu} \Phi(zt^{-\nu},1)} = t^{(r+r^2)\nu} \left( t^{-r\nu} \Phi(zt^{-\nu},1) \right)^r  B_r \left( \frac{zt^{-(r+1)\nu}}{t^{-r\nu} \Phi(zt^{-\nu},1)} \right). \]
Now observe that $(r+1)\nu=\gamma$, so that $t^{-r\nu} \Phi(zt^{-\nu},1)$ satisfies the algebraic equation \eqref{eq:7}. Furthermore it has the same behavior for $z \to \infty$ as
$\Phi(z,t)$ and this proves \eqref{eq:8}.

Besides, if $\omega\in\mathbb{C}$ is the $(r+1)$th root of unity, i.e. $\omega^{r+1}=1$, then \eqref{eq:7} with $z$ replaced by $\omega z$ becomes 
\[
	1= t^{r\gamma} \left(\Phi(\omega z,t)\right)^{r+1} \prod_{k=1}^r \left(\frac{\omega z t^{-\gamma}}{ \Phi(\omega z,t)} -\alpha_k\right) 
\]
which is the same as 
\[
	1= t^{r\gamma} \left(\omega^{-1}\Phi(\omega z,t)\right)^{r+1} \prod_{k=1}^r \left(\frac{z t^{-\gamma}}{\omega^{-1} \Phi(\omega z,t)} -\alpha_k\right) . 
\]
Hence $\omega^{-1}\Phi(\omega z,t)$ satisfies the algebraic equation \eqref{eq:7} and its roots have the same asymptotic behavior for $z\to\infty$ as $\Phi(z,t)$, which implies \eqref{eq:sym} with $k=1$. By iteration, we obtain \eqref{eq:sym}. 

By evoking similar arguments, based on \eqref{eq Phi1}, one shows that \eqref{eq:conj phi} holds. 
\end{proof}

The properties of the algebraic function $\Phi(z,t)$ satisfying \eqref{eq:7} follow directly from the study of  $\Phi(z,1)$ satisfying 
\begin{equation}\label{Phi 1}
	\frac{1}{\Phi(z,1)} 
	= \left(\Phi(z,1)\right)^r \prod_{j=1}^r \left(\frac{z}{\Phi(z,1)} - \alpha_j\right) .
\end{equation}

As a direct consequence of Proposition \ref{prop:1}, one can see from  \eqref{eq:5} that the factors $\phi_k(z,t)$ of $\Phi(z,t)$ satisfy 
\begin{equation*}
	\phi_k(z,t) = t^{-\frac{\gamma}{r+1}} \, \phi_k(z t^{-\frac{\gamma}{r+1}},1),
\end{equation*}
as well as 
\begin{equation*}
	\phi_k(\omega z,t) = \omega^{-1} \phi_k(z,t), \quad\text{for}\quad 1\leq k \leq r.
\end{equation*}

In the light of Proposition \ref{prop:1}, our main interest is the study of the multi-valued algebraic function defined by  
\begin{equation}\label{eq:5 no t}
		1= \left( \Phi(z,1) \right)^{r+1} \prod_{j=1}^r \left(\frac{z}{\Phi(z,1)} -\alpha_j\right) .
\end{equation}
We are primarily interested in the solution  $\Phi(z,1) \sim z^{-r}$ as $z \to \infty$.
We write 
\begin{equation}
\label{Phi z F}
	\Phi(z,1) = z F(z^{r+1})
\end{equation}
and \eqref{eq:5 no t} gives the algebraic equation for the function $F(z)$ in \eqref{eqF}, 
or, equivalently, 
\begin{equation} \label{eqF2}
	\xi = \frac{1}{ F(\xi)  \prod\limits_{j=1}^r \left(1 -\alpha_j F(\xi)\right) } \quad \text{with}\quad  \xi=z^{r+1}. 
\end{equation}
The function $F(\xi)$ is an algebraic function of order $r+1$ and genus $0$. The rational function $\xi(F)$ gives the composition of a conformal map of the sphere to the Riemann surface $\mathcal{R}$ of the function $F(\xi)$ and the projection to the complex plane. 

%
%

If the discriminant has $r+1$ zeros which are the branch points $\xi_0,\ldots \xi_r$, then $F$ has $(r+1)$ branches with the following asymptotic behavior 
\[
	F_0 (\xi) = \frac{1}{\xi} + \mathcal{O} (1/\xi^r) , 
	\quad F_k(\xi) = \frac{1}{\alpha_k}  + \mathcal{O}\left(1/\xi\right), 
	\quad \text{for} \quad k=1,\ldots ,r .
\]

\section{Asymptotic zero distribution}\label{sec:Asympt}

The asymptotic zero distribution of a sequence of polynomials (or the empirical distribution of its zeros) is obtained as a weak$^*$ limit of the zero counting measures. Precisely, the zero counting measure of a polynomial $P$ is defined as 
\[
	\chi (P) := \sum_{\{x:P(x)=0\}} \delta_x
\]
where the sum is taken over all zeros of the polynomial $P$, counting their multiplicities, and $\delta_x$ is the Dirac measure supported at $x$ (i.e., a mass point at $x$). We define the  empirical distribution of the zeros of $Q_n(z,N)$  (its normalized zero counting measure) by
\[
	\mu_{n} = \frac{1}{n} \chi(Q_n(z;N)):= \frac{1}{n} \sum_{k=1}^n  \delta_{y_{{}_{k,n,N}}} , 
\]
where $\{y_{k,n,N}, 1 \leq k \leq n \}$ are the zeros of $Q_n(z,N)$. The corresponding Stieltjes transforms are given by 
\[    \frac{1}{n} \frac{Q_n'(z,N)}{Q_n(z,N)} = \int \frac{1}{z-x} \, d\mu_n(x). 
\]
Our goal is to show that the sequence of measures $(\mu_n)_{n\geq0}$ converges in the weak$^*$ limit to a measure $\mu$.  
For that, we need to find 
\begin{equation}   \label{eq:S}
   \lim_{n \to \infty, n/N \to t} \frac{1}{n} \frac{Q_n'(z,N)}{Q_n(z,N)}. 
\end{equation}
The limit in \eqref{eq:S} will then give the Stieltjes transform
\[  \Gt(z) = \int \frac{d\mu(x)}{z-x}  \] 
of the weak limit of the measures $(\mu_n)_n$ and, by using the Sokhotski-Plemelj formula (Stieltjes inversion formula) one can retrieve from $\Gt$ the measure $\mu$ describing 
the asymptotic distribution of the zeros.

First, we prove the following result, which gives an expression for the Stieltjes transform \(\Gt(z)\) of the measure describing the asymptotic distribution of the the zeros. 

\begin{theorem}  \label{prop:2}
Let \(\Phi(z,t)\) is the solution  of  \eqref{eq:ratio} such that $z^r\Phi(z,t) \to 1$ as $z \to \infty$. 
Then, one has 
\begin{equation}   \label{eq:13b}
   \lim_{N \to \infty} \frac{1}{N} \frac{Q_N'(z,N)}{Q_N(z,N)} = - \frac{1}{r} \int_0^1 \frac{\displaystyle \partial_z \Phi(z,t) }{\Phi(z,t)} \, \mathrm{d}t , 
\end{equation}
uniformly on compact sets of $\mathbb{C} \setminus \Sigma$. 
\end{theorem}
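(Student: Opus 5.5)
We plan to write $Q_N$ as a telescoping product of ratios and then use the ratio asymptotics of Theorem~\ref{Thm1} to turn the logarithmic derivative into a Riemann sum. Write $N = Mr + k_0$ with $0\le k_0<r$. Telescoping in steps of $r$ gives
\[
  \frac{1}{Q_N(z,N)} = \frac{1}{Q_{k_0}(z,N)}\prod_{m=0}^{M-1} \frac{Q_{mr+k_0}(z,N)}{Q_{(m+1)r+k_0}(z,N)} .
\]
Since $Q_{k_0}(z,N)=z^{k_0}$ for $k_0\le r$, taking logarithmic derivatives yields
\[
  -\frac{1}{N}\frac{Q_N'(z,N)}{Q_N(z,N)} = -\frac{k_0}{Nz} + \frac{1}{N}\sum_{m=0}^{M-1} \frac{\partial_z R_{m}(z,N)}{R_m(z,N)}, \qquad R_m(z,N) = \frac{Q_{mr+k_0}(z,N)}{Q_{(m+1)r+k_0}(z,N)} .
\]
The first term tends to $0$. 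Each $R_m$ is analytic and zero-free on $\mathbb{C}\setminus\Sigma$, since all zeros of the $Q_n$ lie on $\Sigma$ by Corollary~\ref{cor:zero bounds}. So $\partial_z R_m/R_m=\partial_z \log R_m$ is well defined on simply connected pieces, and the logarithmic derivative is single-valued anyway.

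Next we identify the limit of each summand. By Theorem~\ref{Thm1}, $R_m(z,N)\to\Phi(z,t)$ uniformly on compact subsets of $\mathbb{C}\setminus\Sigma$ whenever $mr/N\to t$, with $t\in(0,1]$. Uniform convergence of analytic functions gives convergence of derivatives (Weierstrass). $\Phi(z,t)$ has no zeros off $\Sigma$: it is a product of the $\phi_k$, and \eqref{eq:6} forces $\Phi\neq 0$. Hence $\partial_z R_m/R_m \to \partial_z\Phi(z,t)/\Phi(z,t)$ uniformly on compacts. To pass to the Riemann sum $\frac1N\sum_m \approx \frac1r\int_0^1 \mathrm{d}t$, we need uniformity in $t$. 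We would obtain it by a contradiction/diagonal argument: if the convergence failed uniformly for $t$ in $[\epsilon,1]$, pick $m_N$ with $m_N r/N\to t^*$, and Theorem~\ref{Thm1} gives a contradiction. Continuity of $t\mapsto \partial_z\Phi/\Phi$ comes from the scaling \eqref{eq:8}, $\Phi(z,t)=t^{-r\nu}\Phi(zt^{-\nu},1)$, so
\[
  \frac{\partial_z\Phi(z,t)}{\Phi(z,t)} = t^{-\nu}\frac{\Phi'(zt^{-\nu},1)}{\Phi(zt^{-\nu},1)} .
\]
This is continuous in $t>0$, and for $t\to 0$ it behaves like $-r/z$, because $zt^{-\nu}\to\infty$ and $\Phi(w,1)\sim w^{-r}$.

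The main obstacle is the region of small $t$, i.e.\ $m\le \epsilon N$, where Theorem~\ref{Thm1} with $t=0$ is degenerate and $\Phi(z,t)$ is only defined via the limit. We need a uniform bound on $|\partial_z R_m/R_m|$ on a compact $K\subset\mathbb{C}\setminus\Sigma$ for all $m\le M$, so that this part contributes $O(\epsilon)$. Lemma~\ref{lem: ratio1} bounds $|Q_n/Q_{n+1}|$ above by $\delta^{-1}(1+c_2/\delta)$. Moreover $c_2$ can be chosen uniform for $n\le N$, since $\gamma_n/N^\gamma\le C(n/N)^\gamma\le C$. For a lower bound we use the partial fraction formula \eqref{p frac Q}, which gives $\partial_z \log$ of each one-step ratio directly:
\[
  \partial_z\log\frac{zQ_n}{Q_{n+1}} = \frac{-\sum_\ell A_\ell (r+1)z^r/(z^{r+1}-y_\ell^{r+1})^2}{1+\sum_\ell A_\ell/(z^{r+1}-y_\ell^{r+1})}.
\]
Its numerator is bounded by $c\,c_2/\delta^2$. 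Alternatively, simply write $Q_n'/Q_n=\sum_j 1/(z-y_j)$, so the whole logarithmic derivative is bounded by $N/\delta$. The remainder $\frac1N\sum_{m\le\epsilon N/r}$ of the difference of two such sums is then bounded by
\[
  \frac{1}{N}\Bigl|\frac{Q'_{\lfloor\epsilon N\rfloor}}{Q_{\lfloor\epsilon N\rfloor}}\Bigr|\le \frac{\epsilon}{\delta},
\]
because the small-$m$ part telescopes to $-\frac1N\,Q'_{m_\epsilon r+k_0}/Q_{m_\epsilon r+k_0}$. Similarly, $\frac1r\int_0^\epsilon|\partial_z\Phi/\Phi|\,\mathrm{d}t=O(\epsilon)$ by the behaviour at $t\to0$ noted above. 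Letting $N\to\infty$ and then $\epsilon\to0$ gives \eqref{eq:13b}, uniformly on compacts.
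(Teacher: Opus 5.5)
Your argument is correct and follows the same route as the paper. You telescope in steps of $r$, identify each term $\partial_z R_m/R_m$ with $\partial_z\Phi(z,t)/\Phi(z,t)$ through the ratio asymptotics of Theorem~\ref{Thm1}, and read the sum as a Riemann sum for $\frac{1}{r}\int_0^1 \partial_z\Phi/\Phi\,\mathrm{d}t$. Your extra steps make explicit the exchange of limit and integral that the paper leaves implicit: uniformity in $t\in[\epsilon,1]$ by compactness with continuity from \eqref{eq:8}, and an $O(\epsilon/\delta)$ bound on the initial block from $|Q_n'/Q_n|\le n/\delta$ and boundedness of $\partial_z\Phi/\Phi$ as $t\to 0$.
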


\begin{proof}
Based on  Theorem \ref{Thm1}, the limit \eqref{eq:ratio} exists and the convergence holds uniformly on every compact subset of $\mathbb{C} \setminus \Sigma$, where $\Sigma$ is the set containing all the zeros, which in our case is the
$(r+1)$-star $\bigcup_{j=0}^r \{ x \omega_{r+1}^k, 0 \leq x < \infty \}$.
Taking derivatives in \eqref{eq:ratio} gives
\[   \lim_{n \to \infty, n/N \to t} \frac{Q_n'(z,N)Q_{n+r}(z,N) - Q_n(z,N)Q_{n+r}'(z,N)}{Q_{n+r}^2(z,N)} = \frac{\partial \Phi(z,t)}{\partial z} . \]
The left hand side is equal to
\[    \frac{Q_n(z,N)}{Q_{n+r}(z,N)} \left( \frac{Q_n'(z,N)}{Q_n(z,N)} - \frac{Q_{n+r}'(z,N)}{Q_{n+r}(z,N)} \right), \]
so that
\begin{equation}  \label{eq:10}
   \lim_{n \to \infty, n/N \to t} \left( \frac{Q_n'(z,N)}{Q_n(z,N)} - \frac{Q_{n+r}'(z,N)}{Q_{n+r}(z,N)} \right) = \frac{\displaystyle \frac{\partial \Phi(z,t)}{\partial z} }{\Phi(z,t)} .
\end{equation}
Consider the sum
\begin{equation}   \label{eq:11}
   \sum_{j=0}^{m-1}  \left( \frac{Q_{rj+k}'(z,N)}{Q_{rj+k}(z,N)} - \frac{Q_{r(j+1)+k}'(z,N)}{Q_{r(j+1)+k}(z,N)} \right).
\end{equation}
This telescoping sum equals
\[    \frac{Q_k'(z,N)}{Q_k(z,N)} - \frac{Q_{rm+k}'(z,N)}{Q_{rm+k}(z,N)}.  \]
On the other hand, we can write the sum \eqref{eq:11} also as
\[   N  \sum_{j=0}^{m-1} \int_{\frac{j}N}^{\frac{j+1}N}  \left( \frac{Q_{(\lfloor Nt \rfloor r+k}'(z,N)}{Q_{\lfloor Nt \rfloor r+k}(z,N)} - \frac{Q_{(\lfloor Nt \rfloor +1)r+k}'(z,N)}{Q_{(\lfloor Nt\rfloor +1)r+k}(z,N)} \right)\, \mathrm{d}t, \]
because on $[\frac{j}{N},\frac{j+1}{N})$ we have that $\lfloor Nt \rfloor = j$. Thus, the sum \eqref{eq:11} is also
\[   N \int_0^{m/N}   \left( \frac{Q_{(\lfloor Nt \rfloor r+k}'(z,N)}{Q_{\lfloor Nt \rfloor r+k}(z,N)} - \frac{Q_{(\lfloor Nt \rfloor +1)r+k}'(z,N)}{Q_{(\lfloor Nt \rfloor +1)r+k}(z,N)} \right) \, \mathrm{d}t.  \]
Hence, using \eqref{eq:10} we find
\begin{equation}  \label{eq:12}
  \lim_{m \to \infty, mr/N \to 1} \frac{1}{N} \frac{Q_{mr+k}'(z,N)}{Q_{mr+k}(z,N)} = - \int_0^{1/r} \frac{\displaystyle \partial_z \Phi(z,tr) }{\Phi(z,tr)} \, \mathrm{d}t. 
\end{equation}
By the change of variables $tr \to t$ and $N=mr+k$, we thus have \eqref{eq:13b}.
\end{proof}

From \eqref{eq:8} it follows
\begin{equation*}
	-  \frac{1}{r}\frac{\partial_z \Phi(z,t)}{\Phi(z,t)} 
	 = \frac{1}{z} \left(\frac{t}{r\,\nu} \, \frac{\partial_t \Phi(z,t)}{\Phi(z,t)} + 1 \right) , 
\end{equation*}
and, more relevantly, 
\begin{equation*}
	-  \frac{1}{r}\frac{\partial_z \Phi(z,t)}{\Phi(z,t)} 
	= -  \frac{1}{r} {t}^{- \nu} \left.\frac{\partial_u\Phi( u,1)}{\Phi(u,1)}\right|_{u=z {t}^{-\nu}}
	= -  \frac{1}{r z}  \left.\frac{u \, \partial_u\Phi( u,1)}{\Phi(u,1)}\right|_{u=z {t}^{-\nu}}.
\end{equation*}

\subsection{Proof of Theorem \ref{Thm2}}\label{Subsec Proof Thm2}

Writing $\Phi(u,1) = u F(u^{r+1})$, then 
\[
	\frac{\partial_u \Phi(u,1)}{\Phi(u,1)} = \frac{F(u^{r+1}) +  (r+1) u^{r+1}  F^\prime (u^{r+1})}{uF(u^{r+1}) }
\]
and this gives 
\[
	-  \frac{1}{r}\frac{\partial_z \Phi(z,t)}{\Phi(z,t)} 
	= -  \frac{1}{r\,z}  \left( 1+ (r+1) \left.\frac{ u^{r+1} F'(u^{r+1})}{F(u^{r+1})}\right|_{u=z {t}^{-\nu}} \right), 
\]
where $F'(y) := \frac{\mathrm{d}}{\mathrm{d}y} F(y)$. We write equation \eqref{eqF} as 
\[
	1= y F(y) A_r(F(y))  \qquad \text{with} \qquad A_r(F(y)) =  \prod_{j=1}^r \left(1 -\alpha_j F(y)\right), 
\]
which, after differentiation with respect to $y$, leads to 
\[
	y \frac{F'(y)}{F(y)} = -  \frac{A_r(F(y))}{\left( A_r(F(y)) + F(y) A_r' (F(y))\right)} =  -  \frac{1}{1 + F(y) \, \frac{A_r' (F(y))}{A_r(F(y))}}. 
\]

Hence, based on Theorem \ref{prop:2}, we have  
\begin{align*} 
	\lim_{N \to \infty} \frac{1}{N} \frac{Q_N'(z,N)}{Q_N(z,N)} 
	& =  - \frac{1}{r} \int_0^1 t^{-\nu} \left.\frac{  \partial_u \Phi(u,1) \ }{\Phi(u,1)}\right|_{u=t^{-\nu} z} \, \mathrm{d}t 
\\
	& = - \frac{1}{rz} \int_0^1 \left(1 + (r+1) \frac{z^{r+1} t^{-\gamma} \ F^\prime(z^{r+1} t^{-\gamma})}{F(z^{r+1} t^{-\gamma})} \right) \mathrm{d}t \\
	& = - \frac{1}{rz} \left(1 + (r+1)  \int_0^1 \frac{z^{r+1} t^{-\gamma} \ F^\prime(z^{r+1} t^{-\gamma})}{F(z^{r+1} t^{-\gamma})}\mathrm{d}t  \right) 
\end{align*} 
because 
\begin{align*}
	-  \frac{1}{r}\frac{\partial_z \Phi(z,t)}{\Phi(z,t)} 
	= & -  \frac{1}{r} t^{-\nu}\left.\left(\frac{\partial_u \Phi(u,1)}{\Phi(u,1)} \right)\right|_{u=z {t}^{-\nu}}
	= -  \frac{1}{r} t^{-\nu}\left.\left(\frac{1}{u} + \frac{(r+1)}{u}\frac{ u^{r+1} F'(u^{r+1})}{F(u^{r+1})} \right)\right|_{u=z {t}^{-\nu}}
\end{align*}
and $\nu=\frac{\gamma}{r+1}$. 
With the change of variable  $u=F(z^{r+1} t^{-\gamma})$, the integral becomes 
\begin{align*}
    \lim_{N \to \infty} \frac{1}{N} \frac{Q_N'(z,N)}{Q_N(z,N)} 
    & =  - \frac{1}{rz}\left( 1+ (r+1)\int_0^1 
    \left.\frac{ \xi F'(\xi)}{F(\xi)}\right|_{\xi=z^{r+1} {t}^{-\gamma}}   \mathrm{d}t\right) \\
	& = 	 - \frac{1}{r\,z}  
	\left( 1- \frac{(r+1)}{\gamma} z^{\frac{r+1}{\gamma}} \int_0^{F(z^{r+1})}  u^{\frac{1-\gamma}{\gamma}}\left(\prod_{j=1}^r (1-\alpha_j u)\right) ^{1/\gamma}	\mathrm{d}u  \right), 
\end{align*}
where we have used  \eqref{eqF2} to obtain 
\[
		 t = \left(z^{r+1}u \prod\limits_{j=1}^r \left(1 -\alpha_j u\right) \right)^{1/\gamma}. 
\]

\subsection{Reduction to the real line}
Following Proposition \ref{prop:zeros} and Corollary \ref{cor:zero bounds}, the sequence of polynomials  $(Q_N(z;N))_{N\geq 0}$ is $(r+1)$-fold symmetric, and each polynomial $Q_N(z;N)$ has  $\lfloor N/(r+1)\rfloor$ distinct real zeros $\widetilde{y}_{N,j,\ell}=({y}_{N,j,\ell})^{r+1}$ that can be ordered: 
\[
	0<y_{N,j,1} < y_{N,j,2}  <\ldots < y_{N,j,\lfloor N/(r+1)\rfloor} . 
\]
Hence, there exist $r+1$ polynomial sequences $Q_N^{[j]}(z;N)$, with $j=0,1,\ldots, r$,  such that 
\[
	Q_{N}(z;N) = z^j Q_{\lfloor N/(r+1)\rfloor}^{[j]}(z^{r+1};N)=z^j \prod_{\ell=1}^{\lfloor N/(r+1)\rfloor} \left( z^{r+1} - \widetilde{y}_{N,j,\ell}\right), 
\] 
with  $\lfloor a\rfloor$ denoting the floor function of $a$. 
This means that the Stieltjes transform given above relates to the limiting distribution of the sequence of the $(r+1)$ powers of the zeros $\left(\left\{({y}_{N,j,\ell})^{r+1}\right\}_{\ell=1}^N\right)_{N\geq0}$ as $N\to \infty$. We are also interested in the limiting distribution of the zeros $\left(\left\{ y_{N,j,\ell} \right\}_{\ell=1}^N\right)_{N\geq0}$. The Stieltjes transform, say \(\widehat{\Gt}\), of the limiting zero distribution of the latter  is therefore the pullback of \(\Gt\) under the map \(z\mapsto z^{r+1}\) and they are related by 
\begin{align*}
 	\Gt(z) 
	&= 	(r+1) z^{r}\lim_{N \to \infty} \frac{1}{(r+1)N+k}\  \left.\frac{  \partial_yQ_{N}^{[k]}(y;(r+1)N+k)}{  Q_{ N}^{[k]}(y;(r+1)N+k)}\right|_{y=z^{r+1}} 
	&= (r+1) z^r \widehat{\Gt}(z^{r+1}). 
\end{align*}
Hence, 
\[
	\widehat{\Gt}(z) = \frac{1}{r(r+1)\,z}  
	\left(- 1+  \frac{(r+1)}{\gamma} \frac{1}{\left(F(z) A_r(F(z))\right)^{\frac{1}{\gamma}}} \int_0^{F(z)}  \left(u \, A_r(u)\right) ^{1/\gamma}	\frac{1}{u}\mathrm{d}u  \right).
\]

\subsection{Connection with free probability via $\St$-transform}


The Stieltjes transform of a probability measure $\mu$ on the real line, 
\[    \Gt(z) = \int_\mathbb{R} \frac{d\mu(x)}{z-x},  \]
is analytic on $\mathbb{C}\backslash \mathrm{supp}(\mu)$  where  $\mathrm{supp}(\mu)$ denotes the support of $\mu$, which   is assumed to be a compact set in $\mathbb{C}$. Let 
\[   \Mt(z) = \frac{1}{z} \Gt(1/z) - 1, \]
then $\Mt(z)+1$ is the moment generating function of $\mu$, with 
\[    \Mt(z) = \sum_{n=1}^\infty m_n z^n, \qquad   m_n = \int_\mathbb{R} x^n\, d\mu(x).  \]
If $\Mt^{-1}$ is its functional inverse function, then the $\St$-transform is
\[   \St(u) = \frac{u+1}{u} \Mt^{-1}(u).  \]
The $\St$-transform is an important notion in free probability, see \cite{MingoSpeicher} \cite{Voiculescu}. For instance, the free multiplicative
convolution $\mu_1 \boxtimes\mu_2$ of two measures $\mu_1$ and $\mu_2$ on $\mathbb{R}_+$  satisfies
\[   \St(\mu_1 \boxtimes \mu_2,u) = \St(\mu_1,u) \St(\mu_2,u).  \]
See the Appendix for the $\St$-transform of a few relevant measures.


Our goal is to describe the \(\St\)-transform of the zero limiting distribution of the rescaled polynomial sequence \((Q_n(z))_{n\geq 0}\), based on the corresponding \(\Gt\)-transform given in Theorem \ref{Thm2}. Observe that \eqref{G_integral1} can be expressed as 
\begin{align}\label{zG}
	z\Gt (z)&
	=   g(F(y)) +1 , 
\end{align}
where \(y=z^{r+1}\) and 
\begin{equation}\label{rho}
	g(y)= -\frac{r+1}{r}\left( 1-  \frac{1}{\gamma} \left(\frac{1}{yA_r(y)}\right)^{1/\gamma}
		\int_0^{y} \left(uA_r(u)\right)^{1/\gamma} \frac{1}{u}	\mathrm{d}u  \right), 
\end{equation}
with $A_r(u) = \prod_{j=1}^r (1-\alpha_j u)$.  

\begin{proposition}\label{PropFree} The $\St$-transform of the zero limiting distribution of the rescaled polynomial sequence \((Q_n(z))_{n\geq 0}\) is given by 
\begin{equation}\label{St}
	\St (w)= \frac{w+1}{w}  g^{-1} (w) \ A_r(g^{-1} (w)) , 
\end{equation}
where \(g^{-1}\) denotes the inverse function of  \(g\) in \eqref{rho}. 
\end{proposition}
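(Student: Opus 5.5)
The plan is to compute $\Mt$ directly from the representation \eqref{zG}, invert it using the algebraic equation \eqref{eqF2} for $F$, and substitute into the definition $\St(u)=\frac{u+1}{u}\Mt^{-1}(u)$.

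\textbf{Step 1: the moment generating function.} By the $(r+1)$-fold symmetry in Proposition~\ref{prop:1} and Corollary~\ref{cor:zero bounds}, only moments of order divisible by $r+1$ survive. So $\Mt$ should be read as a power series in the variable $y=z^{r+1}$, which describes the limiting distribution of the $(r+1)$th powers of the zeros, as in the reduction to the real line. By definition,
\[
\Mt(x)+1=\bigl[z\Gt(z)\bigr]_{z^{r+1}=1/x}.
\]
Using \eqref{zG}, this gives
\[
\Mt(x)=g\bigl(F(1/x)\bigr).
\]
Here $F$ is the branch $F_0$, and by \eqref{eqF} it satisfies $F(1/x)=x+\mathcal{O}(x^{2})$ as $x\to0$.

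\textbf{Step 2: local invertibility.} I would check that $g(v)\to0$ and $g'(0)\neq0$ as $v\to0$. The check is to expand $\frac{1}{\gamma}(vA_r(v))^{-1/\gamma}\int_0^v (uA_r(u))^{1/\gamma}\,\frac{du}{u}$ around $v=0$. Since $A_r(u)=1+\mathcal{O}(u)$, this quantity equals $1+c\,v+\mathcal{O}(v^2)$ with an explicit $c\neq0$, proportional to $\sum_j\alpha_j$. Hence $g$ is locally biholomorphic at $0$ with $g(0)=0$, and its inverse $g^{-1}$ is well defined near $w=0$. Combined with $F(1/x)\sim x$, this shows that $\Mt$ is a local biholomorphism at the origin.

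\textbf{Step 3: inversion.} To invert, I set $w=\Mt(x)$, so that $F(1/x)=g^{-1}(w)$. The defining equation \eqref{eqF2}, evaluated at $\xi=1/x$, reads
\[
\frac{1}{x}=\frac{1}{F(1/x)\,A_r\bigl(F(1/x)\bigr)}.
\]
It follows that
\[
\Mt^{-1}(w)=x=g^{-1}(w)\,A_r\bigl(g^{-1}(w)\bigr).
\]
Multiplying by $\frac{w+1}{w}$ then yields \eqref{St}.

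\textbf{Main obstacle.} The main point needing care is the branch bookkeeping. One has to confirm that the composition $g\circ F_0(1/\cdot)$ is the branch that is analytic at $x=0$ with the correct normalization, so that the functional inverse is taken near $w=0$, where the $\St$-transform is defined. One also has to make explicit that the moments are those of the pushforward under $z\mapsto z^{r+1}$. The algebraic manipulation itself is immediate once these identifications are in place.
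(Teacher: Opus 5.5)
Your proposal is correct and follows the paper's proof essentially step for step: write $\Mt = g\circ F(1/\cdot)$ using \eqref{zG}, invert via $F^{-1}(v)=1/(vA_r(v))$ from \eqref{eqF2}, and substitute into $\St(w)=\frac{w+1}{w}\Mt^{-1}(w)$. Your extra remarks make explicit what the paper leaves implicit, and they are right: $\Mt$ is really the moment generating function of the pushforward of the zero distribution under $z\mapsto z^{r+1}$, and $g(0)=0$ with $g'(0)=\frac{(r+1)\sum_j\alpha_j}{r(1+\gamma)}\neq 0$, so $g^{-1}$ exists near $w=0$.
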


\begin{proof} From Theorem \ref{Thm2}, we have 
\begin{align*}
	z\Gt (z)-1
	&=  - \frac{r+1}{r}
	\left( 1- \frac{1}{\gamma } \frac{1}{\left(F(y) A_r(F(y))\right)^{\frac{1}{\gamma}}} \int_0^{F(y)}  \left(u \, A_r(u)\right) ^{1/\gamma}	\frac{1}{u}\mathrm{d}u  \right) , 
\end{align*}
which corresponds to \eqref{zG} with $g(y)$ as in \eqref{rho}. 
Consider the change of variable \(z\to 1/z\) to obtain the \(M\)-transform via 
\(\Mt  (y) = \frac{1}{z}\Gt \left(\frac{1}{z}\right) -1\) and this corresponds to
\begin{equation*}
\Mt (y)=  
	 - \frac{r+1}{r}
	\left( 1- \frac{1}{\gamma } \frac{1}{\left(F(1/y) A_r(F(1/y))\right)^{\frac{1}{\gamma}}} \int_0^{F(1/y)}  \left(u \, A_r(u)\right) ^{1/\gamma}	\frac{1}{u}\mathrm{d}u  \right) .
\end{equation*}
Take into account the definition of \(g\) to write 
\(
	\Mt (y) = (g\circ F)(1/y)
\) and therefore its inverse is 
\[
	\Mt ^{-1} (w)= \frac{1}{\left(F^{-1} \circ g^{-1} \right) (w)}. 
\]
Recall that 
\[
	F^{-1} (y) = \frac{1}{yA_r(y)}
\]
to find 
\[
	\Mt ^{-1} (w)=   g^{-1} (w) \ A_r(g^{-1} (w)). 
\]
Use \(\St (w) =  \frac{w+1}{w} \Mt ^{-1}(w) \), to obtain \eqref{St}. 
%
\end{proof}

Note that the expression for the \(\St\)-transform in Proposition \ref{PropFree} does not depend on the algebraic function \(F\). 
When \(\gamma=1\),  
\[
	g(y)= -\frac{r+1}{r}\left( 1- \frac{1}{yA_r(y)}\int_0^{y}  	A_r(u)	\mathrm{d}u  \right), 
\] 
which is a rational function in \(y\). Therefore the \(\Gt\)-transform is a rational function in \(F(y)\), but the \(\St\)-transform might be a challenge to compute, since it involves the computation of the inverse function of the rational function \(g\). 
However, particular choices of \(\alpha_j\) lead to explicit and even rational expressions for the \(S\)-transform. Some examples are worked out in detail in Section \ref{Sec:examples}. When the $\St$-transform is known and is a rational function, then it is possible to relate the asymptotic zero distribution to that of certain  hypergeometric type polynomials, based on the result below. 

\begin{theorem}\normalfont{\cite[Theorem 3.9]{MF-MP2}}\label{Thm MF}  For \(s,t\in\mathbb{Z}_{\geq 0}\), let \(\mathbf{a}_n\in\mathbb{R}^s\) and \(\mathbf{b}_n\in\mathbb{R}^t\) be such that finite limits 
\[\lim_{n\to\infty} \frac{1}{n} \mathbf{a}_n = \tilde{\mathbf{A}}=(\tilde{A}_1,\ldots,\tilde{A}_s)\in\mathbb{R}^s
\quad \text{and} \quad 
\lim_{n\to\infty} \frac{1}{n} \mathbf{b}_n = \tilde{\mathbf{B}}=(\tilde{B}_1,\ldots,\tilde{B}_t)\in\mathbb{R}^t\]
exist. Assume additionally that \(\tilde{A}_j\notin[-1,0)\), \(\tilde{B}_k\neq -1\) and \(\tilde{A}_j\neq\tilde{B}_k\) for all \(j\in\{1,\ldots,s\}\) and \(k\in\{1,\ldots,t\}\). Then any weak$^*$ limit \(\mu\) of the normalized zero-counting measures \(\chi(p_n)\) of the sequence 
\[p_n(x) = \pFq{s+1}{t}{-n,\mathbf{a}_n}{\mathbf{b}_n}{n^{t-s}x}\]
is a positive probability measure compactly supported on \(\mathbb{C}\) for which in a neighborhood of the origin,
the corresponding \(\St\)-transform is given by 
\[
    \St_{\mu}(z) = \frac{\prod\limits_{j=1}^s (z + \tilde{A}_j + 1)}{\prod\limits_{k=1}^t (z + \tilde{B}_k + 1)}\ .
\]
\end{theorem}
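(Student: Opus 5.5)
The plan is to reduce the statement to two facts: the multiplicativity of the $\St$-transform under (finite) free multiplicative convolution, and an explicit computation for the simplest hypergeometric building blocks. The first observation is that, in terms of the coefficients in the basis $\binom{n}{k}(-x)^k$, the finite free multiplicative convolution $\boxtimes_n$ of two degree-$n$ polynomials simply multiplies the normalized coefficient sequences. The coefficients of $\pFq{s+1}{t}{-n,\mathbf{a}_n}{\mathbf{b}_n}{n^{t-s}x}$ in this basis are $\prod_j (a_{n,j})_k/\prod_\ell (b_{n,\ell})_k$, up to the scaling factor $n^{(t-s)k}$. Since these are products of Pochhammer factors, one gets the factorization
\[
p_n = \mathop{\boxtimes_n}_{j=1}^{s} \pFq{2}{0}{-n,a_{n,j}}{-}{x/n} \;\boxtimes_n\; \mathop{\boxtimes_n}_{\ell=1}^{t} \pFq{1}{1}{-n}{b_{n,\ell}}{n x},
\]
with the unit element being $\pFq{1}{0}{-n}{-}{x}=(1-x)^n$. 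The first step is to verify this identity, which is a direct coefficient comparison.

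The second step is to compute the limiting $\St$-transform of each block. The blocks of the form ${}_1F_1(-n;b_n;nx)$ are rescaled Laguerre polynomials with parameter $b_n-1$. Since $b_n/n\to\tilde B$, their zero counting measures converge to a Marchenko--Pastur law. Using the appendix formulas for that law, I expect the limiting $\St$-transform to be $1/(z+\tilde B+1)$. The blocks ${}_2F_0(-n,a_n;;x/n)$ are reversed Laguerre (Bessel-type) polynomials. Their limit is the reciprocal image of a Marchenko--Pastur law, and I expect the $\St$-transform $z+\tilde A+1$. To confirm this rather than rely on inversion rules, I would compute directly from the moment generating function, i.e.\ solve the algebraic equation for $\Mt$ that follows from the Laguerre differential equation in the limit. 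The hypotheses $\tilde A_j\notin[-1,0)$ and $\tilde B_k\neq-1$ are exactly what keeps these limiting measures nondegenerate and their $\St$-transforms analytic and nonvanishing near $0$. The hypothesis $\tilde A_j\neq\tilde B_k$ prevents cancellations that would drop the degree.

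The third step, and the main obstacle, is passing to the limit in the convolution. For measures on $\mathbb{R}_+$ one can invoke the known convergence of $\boxtimes_n$ to $\boxtimes$ together with $\St(\mu_1\boxtimes\mu_2)=\St(\mu_1)\St(\mu_2)$. Here, however, the blocks may have zeros off the positive axis, for instance when $\tilde A_j<-1$, so analytic free probability is not directly available. My plan is therefore to work at the level of formal power series. I would use Marcus' finite $\St$-transform, which is defined from the coefficient ratios and is exactly multiplicative under $\boxtimes_n$. I would then show that it converges coefficientwise, uniformly near the origin, to the $\St$-transform of the limiting measure, using that $\tfrac1n$-scaled parameters converge so that each coefficient ratio has a limit. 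Multiplicativity then gives the product formula for any weak$^*$ limit $\mu$. The positivity and compact support of $\mu$ follow from uniform bounds on the zeros of $p_n$, obtained from the coefficient growth via, e.g., the Fujiwara bound. This identifies $\St_\mu$ near the origin.
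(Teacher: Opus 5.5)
The paper does not prove this statement; it quotes it from \cite{MF-MP2}, so there is no internal argument to compare with. Your Step~1 is correct, and factoring through $\boxtimes_n$ is the strategy of the source: in the basis $\binom{n}{k}(-x)^k$ the operation $\boxtimes_n$ multiplies coefficient sequences, up to an irrelevant factor $(-1)^n$, and dilations multiply, so the scalings $n^{-1}$ and $n$ combine to $n^{t-s}$. The block transforms $z+\tilde A+1$ and $1/(z+\tilde B+1)$ are also right. Two corrections on the way: the Marchenko--Pastur picture only covers $\tilde B>-1$ and $\tilde A<-1$ (in the latter case the block zeros are negative reals, the image of a Marchenko--Pastur law under $y\mapsto -1/y$). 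The genuinely non-real blocks are $\tilde B<-1$ (Laguerre with parameter below $-n$) and $\tilde A>0$ (Bessel type). Also, $\tilde A_j\neq\tilde B_k$ has nothing to do with the degree of $p_n$.

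The real gap is Step~3. Convergence of the ratios $\tilde e_{k-1}/\tilde e_k$ for $k/n\to\kappa$ gives you a finite $\St$-transform on a grid in $(-1,0)$. For a measure on $\mathbb{C}$, however, $\St_\mu$ is only a germ at $0$ built from the holomorphic moments, and these depend on $1/n$-corrections of the ratios at fixed $k$. For monic $p_n$ with $e_k=\binom nk\tilde e_k$ one has $\int x^2\,d\mu_n=n(\tilde e_1^2-\tilde e_2)+\tilde e_2$, so the second moment is governed by $n\bigl(S_n(-2/n)-S_n(-1/n)\bigr)$. The results identifying limits of finite $\St$-transforms with $\St_\mu$ are available for polynomials with nonnegative roots, which your blocks need not have. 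Moreover, for $p_n$ itself the ratios are exact: $\tilde e_{k-1}/\tilde e_k=S_n(-k/n)$ with $S_n(z)=\prod_j(z+1+a_{n,j}/n)/\prod_l(z+1+b_{n,l}/n)$. So if the identification you invoke held for complex-rooted sequences, Steps~1--2 would be superfluous: the step you assert is the whole theorem.

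To close it, either combine block-level moment limits with moment-level convergence of $\boxtimes_n$ to $\boxtimes$ (via finite free cumulants, which is algebraic and needs no real-rootedness), or apply your differential-equation idea directly to $p_n$. With $\theta=x\frac{d}{dx}$,
\[
\Bigl[\theta\prod_{l}(\theta+b_{n,l}-1)-n^{t-s}x(\theta-n)\prod_{j}(\theta+a_{n,j})\Bigr]p_n=0 .
\]
If $\frac1n p_n'/p_n\to\Gt$ with derivatives on $|x|>R$ (which needs a uniform zero bound, see below), dividing by $n^{t+1}$ gives, for $w=x\Gt(x)$, the relation $w\prod_l(w+\tilde B_l)=x(w-1)\prod_j(w+\tilde A_j)$. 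With $u=w-1=\Mt(1/x)$ this reads $\Mt^{-1}(u)=\frac{u}{u+1}\prod_j(u+\tilde A_j+1)/\prod_l(u+\tilde B_l+1)$. This is locally invertible at $u=0$ because $\tilde A_j,\tilde B_l\neq-1$.

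Second, the Fujiwara bound cannot give what you need. After normalizing $p_n$ to be monic, $e_1=n\int x\,d\mu_n$ is of order $n$, so Fujiwara (or Cauchy, Kojima) only gives $|x|\lesssim n$, as $(x-1)^n$ already shows. A uniform bound is indispensable, both for compactness of $\mu$ and for the convergence of $\frac1n p_n'/p_n$ near infinity. This is where your factorization pays off. First bound the zeros of each block uniformly. This is where $\tilde A_j\notin[-1,0)$ enters: for $-1<\tilde A<0$ the relevant Marchenko--Pastur law has an atom at $0$, so a positive fraction of the zeros of ${}_2F_0(-n,a_n;;x/n)$ escapes to infinity. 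Then use Szeg\H{o}'s composition theorem: if the zeros of $p$ and $q$ lie in $|x|\le R_1$ and $|x|\le R_2$, those of $p\boxtimes_n q$ lie in $|x|\le R_1R_2$.
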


We refer to the Appendix for a specific discussion regarding some relevant distributions. 

\section{Examples}\label{Sec:examples}

We start with some degenerate cases, revisiting some known results. 

\subsection{A degenerate case where \texorpdfstring{$\alpha_1=\ldots =\alpha_r$}{alpha1=...=alphar} and \texorpdfstring{$\gamma =r$}{gamma=r}}

We analyze this particular  and degenerate case when $\gamma=r$ and  $\alpha_1=\ldots = \alpha_r=\alpha$. Hence, the recurrence coefficients present the same asymptotic behavior. It is therefore a degenerate case.  The problem here is  similar to the one studied in \cite[Thm 1.2]{Neuschel} and also appeared in the study of Laguerre-Angelesco polynomials in \cite{LeursVA20}.

Under these assumptions \eqref{eq:5 no t}-\eqref{Phi z F} reads as 
\[
	1= \Phi(z) \left( z-\alpha\Phi(z)\right)^r ,
\]
where $\Phi(z):=\Phi(z;1) :=z F(y)$. 
After the substitution, we get 
\[1= y F(y)  \left( 1-\alpha \, F(y)\right)^r .\]
By Theorem \ref{Thm2}, the corresponding  \(\Gt\)-transform given in \eqref{eq:13} (see also   \eqref{zG}-\eqref{rho}) corresponds to  
\begin{equation*}\label{zG case1a}
	z\Gt(z) = g(F(y)) +1= \frac{1}{ 1-\alpha F(y)  } , 
\end{equation*}
since 
\[g(y) = \frac{\alpha y}{1-\alpha y}. \]
Therefore, \(\Gt(z)\) is a solution of the equation 
\[ \left(z\Gt(z)\right)^{r+1} - \frac{y}{\alpha} \left(\left(z\Gt(z)\right) - 1\right)=0. 
\]
By Proposition \ref{PropFree}, the \(\St\)-transform is given by \eqref{St}, where in this case the inverse function of the function $g$ is 
\[g^{-1}(w) = \frac{w}{\alpha (1+w)},\]
and hence 
\[\St(w) = \frac{1}{\alpha}  \frac{1}{\left(w+1\right)^r} .\]
This is the \(\St\)-transform of the Fuss-Catalan distribution, up to a factor \(1/\alpha\), which means
that \(\St\) is the \(\St\)-transform of the Fuss-Catalan distribution on \([0, \alpha(r + 1)^{r+1}/r^r]\).

We take into account \cite[Theorem 3.9 ]{MF-MP2} (see Theorem \ref{Thm MF}) to conclude that the asymptotic zero distribution of $(Q_n(x))_{n\geq0}$ coincides with that of an hypergeometric polynomial 
\[\pFq{1}{r}{-n}{b_1,\ldots,b_r}{n^r x},  \]
where \(\ b_1, \ldots ,\  b_r\) are real constants.

\parbox{0.5\textwidth}{
\includegraphics[width=0.4\textwidth]{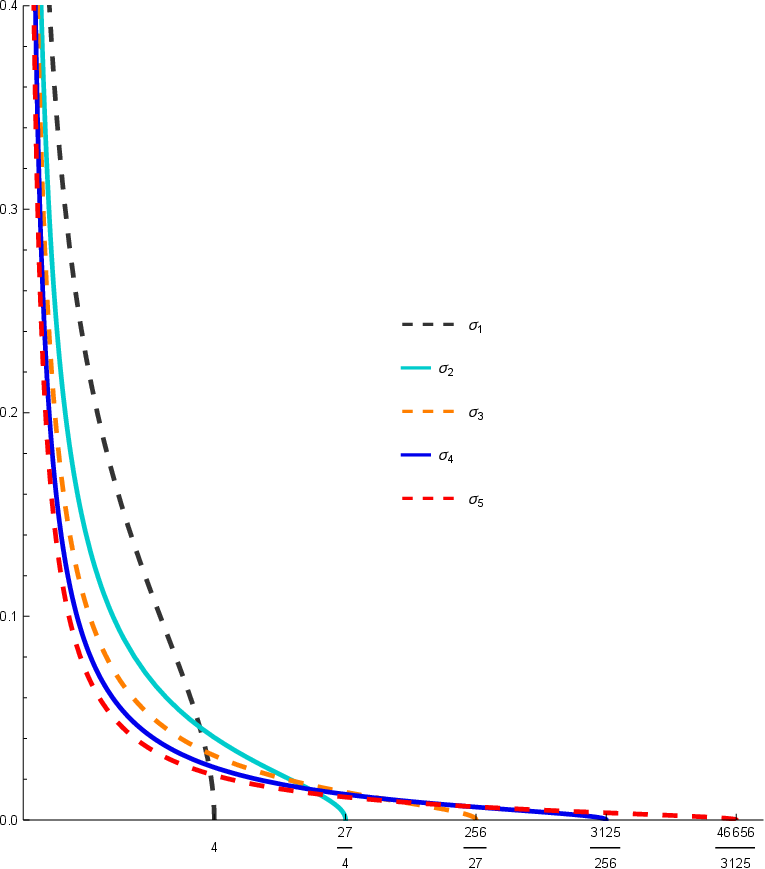}\\
{\footnotesize Plot of the densities $\sigma_j$ for the distribution of the zeros of the rescaled polynomials, under the assumption that $\alpha_j=1$ and $\gamma=r$ for all $j=1,\ldots,r$.}}\quad 
\parbox{0.5\textwidth}{ ~ \\
\includegraphics[width=0.45\textwidth]{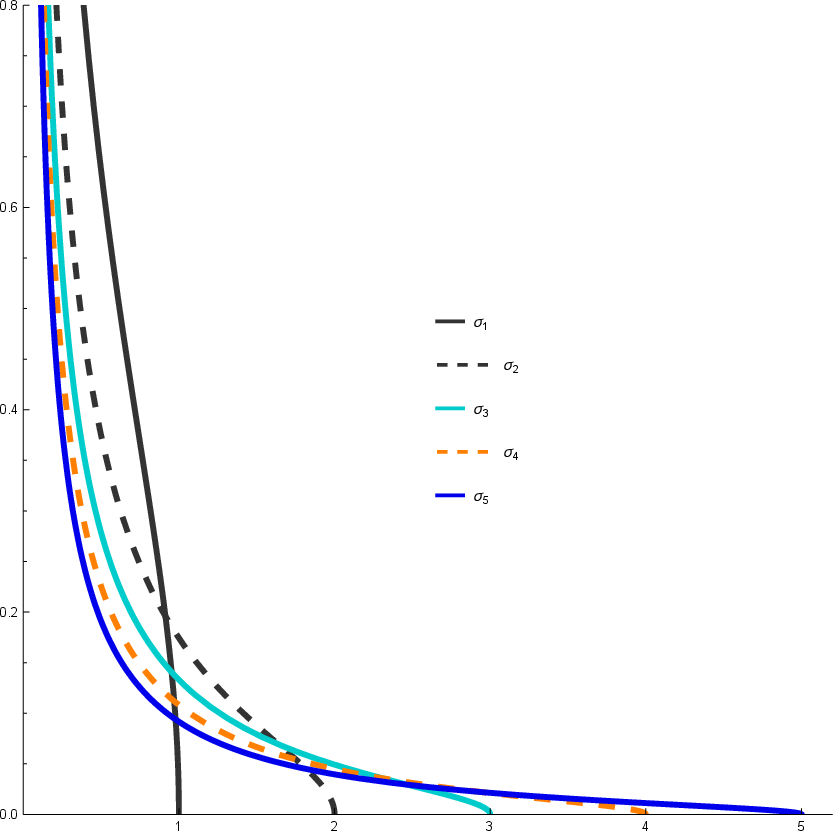} \\
{\footnotesize Plot of the densities $\sigma_j$ for the distribution of the zeros of the rescaled polynomials, under the assumption that $\alpha_j=\left(\frac{r}{r+1}\right)^{r+1}$ and $\gamma=r$ for all $j=1,\ldots,r$.}}


\subsection{A degenerate case where \texorpdfstring{$\alpha_1=\ldots =\alpha_r$}{alpha1=...=alphar} and \texorpdfstring{$\gamma =1$}{gamma=1}}

We analyze another particular case where, like the previous one, all recurrence coefficients present the same asymptotic behavior, that is $\alpha_1=\ldots = \alpha_r=\alpha$, but now we assume $\gamma=1$. We include this example here for a matter of completion. Finding an explicit expression for the \(\St\)-transform is not straightforward as in the previous case, although the corresponding inverse Stieltjes transform presents a similar behavior.

Under these assumptions \eqref{eq:5 no t}-\eqref{Phi z F} reads again as 
\[
	1= \Phi(z) \left( z-\alpha\Phi(z)\right)^r ,
\]
where $\Phi(z):=\Phi(z;1) :=z F(y)$. 
After the substitution, we get 
\[1= y F(y)  \left( 1-\alpha \, F(y)\right)^r .\]
Recall relations \eqref{zG}-\eqref{rho} in Theorem \ref{Thm2} to deduce that \(\Gt\)-transform is given by 
\begin{equation*}\label{zG case1}
	z\Gt(z) = \frac{1}{r\alpha F(y) \left( 1-\alpha F(y)\right)^r } - \frac{1}{r\alpha F(y)}      
		= \frac{y}{r\alpha} - \frac{1}{r\alpha F(y)}.
\end{equation*}
Therefore, \(\Gt(z)\) is a solution of the equation 
\[
	\left(\frac{z^{r+1}-r\alpha z\Gt(z)}{ z^{r+1}-r\alpha z\Gt(z)-\alpha}\right)^{r+1}
	-  \left(\frac{z^{r+1}}{ z^{r+1}-r\alpha z\Gt(z)-\alpha }\right) =0
\]
If we set   
\[  W = W(z):=\frac{z^{r+1}-r\alpha z\Gt(z)}{ z^{r+1}-r\alpha z\Gt(z)-\alpha}
\]
or, equivalently, 
 \[
    \Gt(z)
 =
 \frac{z^{r}}{r\alpha}- \frac{1}{rz} \left(1+\frac{1}{W(z)-1} \right) ,
 \]
then 
\[
	 {W }^{r+1} - \frac{z^{r+1}}{\alpha}\left( W-1\right) =0
\]
holds. We seek a solution of the form \(W=\rho \e^{\ii \theta}\) with \(\theta\) real and \(\rho>0\). We compare real and imaginary parts to obtain the parametric representation 
\[
	\rho = \frac{ \sin( (r+1)\theta) }{\sin( r\theta)}
	\quad \text{and}\quad 
	y=  \frac{\alpha (\sin (r+1)\theta)^{r+1} }{  (\sin r\theta)^{r} \sin(\theta)}, \quad \text{with} \quad \theta\in (0,\pi/(r+1)). 
\]

We compute the inverse Stieltjes transform of $\Gt$ by taking into account that $W_{-}=\rho \e^{-\ii \theta}$ and $W_{+}=\rho \e^{\ii \theta}$ are both solutions of the algebraic equation above. Based on this, we have 
\[
	 \sigma_r(x^{r+1}) =  \frac{1}{2\pi\ii} \lim_{\epsilon\to 0+} \left( \Gt(x+\ii \epsilon) - \Gt(x-\ii \epsilon) \right) 
	= \frac{W_{-} - W_{+}}{2\pi\ii \ r \ z\left|W-1\right|^2} .
\]
Hence, we obtain a representation for the density in parametric form, representing the limiting zero distribution, precisely 
\[\begin{array}{rcl} 
\left[0,\tfrac{\pi}{(r+1)}\right] & \longrightarrow &  \left(0, \frac{(r+1)^{r+1}}{r^r}\right] \times \left[0,+\infty\right)\\[0.3cm]
\theta &\longmapsto & \left( y(\theta) \ ,\ \sigma_r(y(\theta)) \right) 
\end{array}\]
where
\[\sigma_r(y(\theta)) 
= \frac{1}{\pi \, r\,y(\theta)}\frac{\rho(\theta) \sin \theta}{\rho(\theta)^2-2\rho(\theta)\cos\theta+1},
\]
and  
\[\rho(\theta) = \frac{ \sin( (r+1)\theta) }{\sin( r\theta)}
	\quad \text{and}\quad 
y(\theta):=x^{r+1}(\theta)=\frac{\alpha  \  \sin ((r+1)\theta))^{r+1} }{  \sin^r (r\theta)  \   \sin(\theta)}, \quad \text{for} \quad \theta\in (0,\pi/(r+1)).\]

\parbox{0.5\textwidth}{
\includegraphics[width=0.45\textwidth]{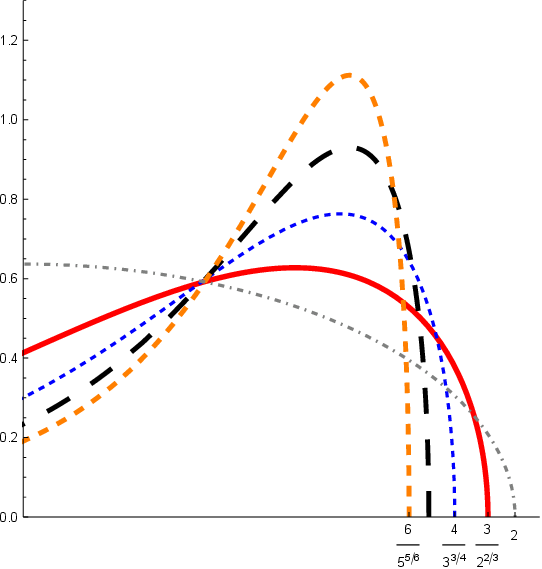}\\
{\footnotesize Plot of the densities for the distribution of the $(r+1)$th principal roots of the zeros of the rescaled polynomials, under the assumption that $\alpha_j=1$ for all $j=1,\ldots,r$.}}\quad 
\parbox{0.5\textwidth}{
\includegraphics[width=0.45\textwidth]{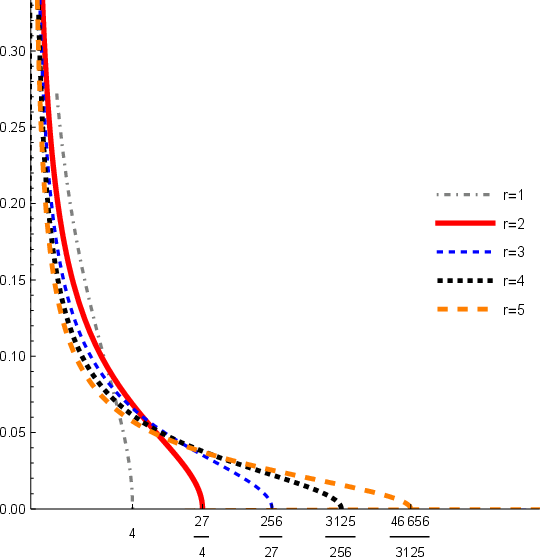} \\
{\footnotesize Plot of the densities for the distribution of the zeros of the rescaled polynomials, under the assumption that $\alpha_j=1$ for all $j=1,\ldots,r$.\\}}

\bigskip

\noindent{\bf The $\St$-transform}

Set $ z\Gt(z)=v+1$ then we have $y=\frac{v+1}{v\St(v)}$. Now recall the equation for $z\Gt(z)$ to conclude that 
\[
	\frac{v+1}{v\St(v)} \left(\frac{v+1}{v\St(v)} - r\alpha (v+1)-\alpha\right)^{r}
	-\alpha  \left(\frac{v+1}{v\St(v)} - r\alpha (v+1)\right)^{r+1} =0. 
\]

We further note that the asymptotic zero densities that we obtained here are closely related to those in \cite[Thm 1.2]{Neuschel}, which describe the asymptotic zero distribution of multiple Laguerre polynomials of type II of the first kind (see \cite[\S 23.4.1]{Ismail}) when the multi-index lies on the diagonal. 

\bigskip

\subsection{Case where \(\alpha_j=\alpha\neq \alpha_1\) for \(j=2,\ldots,r\)}

When \(\alpha_j=\alpha\) for \(j=1,\ldots, r-1\) and \(\alpha_r=(r+1)\alpha\), it follows from Theorem \ref{Thm1} that the ratio asymptotic in \eqref{eq:ratio} is described by the function $\Phi(z):=zF(y)$ satisfying the equation 
\begin{equation}\label{case2: eq for F}
1= y F(y) A_r(F(y)) \quad \text{where} \quad A_r(u) = (1-\alpha u)^{r-1} (1-(r+1) \alpha u).
\end{equation}
If we set  
\[W(y)=\frac{r}{(r+1) (1-\alpha  F)}
\quad \text{or} \quad F(y)=\frac{1}{\alpha }-\frac{r}{\alpha  (r+1) W(y)}, \]
then \eqref{case2: eq for F} implies that $W$ satisfies 
 \[
	W^{r+1} +(r+1) \hat{y}\,W^2 
	-  (2 r+1) \hat{y} \,   W +r  \hat{y} =0. 
 \]
 We want to find the  branch points of the latter algebraic equation in $W$ and these are the zeros of the discriminant $D_{f_r(W)}(\hat{y}) $ in $W$ of the latter equation, which  is the resultant $\mathrm{Res}(f_r'(W),f_r(W))$ between $f_r(W)$ and $f_r'(W):=\partial_W f_r(W)$ where 
$$f_r(W)=  W^{r+1}+ (r+1)  \hat{y} W^2-(2 r+1)  \hat{y} W+r  \hat{y}.$$ 
Since 
$f_r(W) = \frac{1}{r+1} \left(p(W) + W f_r'(W)\right)$, where 
$$p(W)=\left(r^2-1\right)  \hat{y} W^2-r (2 r+1)  \hat{y} W+r (r+1)  \hat{y}, $$
we find
$$
	\mathrm{Res}(f_r'(W),f_r(W)) = \frac{1}{r+1}\mathrm{Res}(f_r'(W),p(W)). 
$$
Note that $\deg p(W)=2$ and has two roots at $W_{-}$ and $W_{+}$. Thus, 
\begin{align*}
	\mathrm{Res}(f_r'(W),p(W))
	&=	(r-1)^r (r+1)^{r} \ \hat{y}^r\ f'\left(\tfrac{2 r^2+r-\sqrt{5 r+4} \sqrt{r}}{2
   \left(r^2-1\right)}\right) f'\left(\tfrac{2 r^2+r+\sqrt{5 r+4}
   \sqrt{r}}{2 \left(r^2-1\right)}\right) 
\end{align*}
and hence 
\[
	D_{f_r(W)}(\hat{y}) = (-1)^{r (r+1)/2} (r-1)^r (r+1)^{r-1} \hat{y}^r \  f_r'\left(\tfrac{2 r^2+r-\sqrt{5 r+4} \sqrt{r}}{2
   \left(r^2-1\right)}\right) f_r'\left(\tfrac{2 r^2+r+\sqrt{5 r+4}
   \sqrt{r}}{2 \left(r^2-1\right)}\right) . 
\]
Note that for $r\geq 2$ the discriminant 
  has a zero of order $r$ at $\hat{y}=0$ and two other zeros at $\hat{y}=\hat{y}_{r-}<0$  and another at $\hat{y}=\hat{y}_{r+}>0$ which are given by 
\begin{align*}
&  \hat{y}_{r,\pm} = 2^{-r} \left(2 r\pm\sqrt{5 r+4}
   \sqrt{r}+1\right) \left(\frac{2 r^2+r\mp \sqrt{5 r+4}
   \sqrt{r}}{r^2-1}\right)^r . 
\end{align*}
%
Hence, $y_{\pm,r}= \left(\frac{r}{r+1}\right)^r \alpha\ \hat{y}_{\pm,r}$ are branch points of $z\Gt(z)$ in equation \eqref{case 2: eq G}: 
\begin{align}
\label{branch pts Case2}
&  {y}_{r,\pm} = 2^{-r} \left(2 r\pm\sqrt{5 r+4}
   \sqrt{r}+1\right) \left(\frac{2 r+1\mp\sqrt{\frac{5 r+4}{r}}}{r-1}\right)^r \alpha,
\end{align}
whose first values are 
 \begin{align}\label{branch pts Case2 table}
 \begin{array}{|l|c|c|ll}
 \hline
& {y}_{r-}& {y}_{r+}\\[0.3cm]\hline
r=2 & \left(5-\frac{7
   \sqrt{7}}{2}\right) \alpha 
   & \left(5+\frac{7
   \sqrt{7}}{2}\right) \alpha  \\[0.3cm]
r=3   &
   \frac{\left(471-133 \sqrt{57}\right)}{72}  \alpha
   &  \frac{\left(471+133 \sqrt{57}\right) }{72} \alpha \\[0.3cm]
r=4  & 
   \left(\frac{129}{16}-\frac{137}{3 \sqrt{6}}\right) \alpha
   & \left(\frac{129}{16} +\frac{137}{3 \sqrt{6}}\right) \alpha\\[0.3cm]
r=5 & \frac{\left(612595-124091 \sqrt{145}\right) \alpha }{64000}& 
   \frac{\left(612595+124091 \sqrt{145}\right) \alpha }{64000} \\[0.3cm] \hline
   \end{array}
 \end{align}

If we  write $W=\rho \e^{\ii \theta}$ and compare the real and imaginary parts we obtain 
\[
\begin{cases}
	\rho^{r+1}\cos(r+1)\theta + (r+1) \hat{y} \rho^2 \cos(2\theta) - (2r+1) \hat{y} \rho\cos\theta +r  \hat{y} =0, \\ 
	\rho^{r+1}\sin(r+1)\theta + (r+1) \hat{y} \rho^2 \sin(2\theta) - (2r+1) \hat{y} \rho\sin\theta  =0, 
\end{cases}
\]
which gives 
 \begin{align*}
 \begin{cases}
 (r+1)  \rho^2  \sin(r-1)\theta - (2r+1)   \rho \sin r\theta+r    \sin(r+1)\theta=0, \\[0.3cm]
 \hat{y} 
	=  \displaystyle \frac{\rho^{r}\sin(r+1)\theta}{ \left(  (2r+1) - 2 (r+1) \rho \cos(\theta)   \right)\sin\theta} . 
	\end{cases}
 \end{align*}
We are interested in the positive solution that vanishes at $\theta=\frac{\pi}{r+1}$, and that is 
\[
	\rho = \frac{ \left(2 (2 r+1) \sin (r\theta  )-\sqrt{-8 r
   (r+1) \cos (2 \theta )-2 \cos (2r \theta  )+2 (2 r+1)^2}\right)}
   {4
   (r+1)\sin(  (r-1)\theta)}. 
\]

Theorem \ref{Thm2} then describes the Stieltjes transform of the limiting zero distribution, which in this case can be written as 
 \[
 	\Gt(z) = \frac{1}{z} \left(  \frac{1}{1-\alpha  (r+1) F(y)}\right), 
 \]
 and thus, it is a solution of the algebraic equation 
\begin{equation}\label{case 2: eq G}
	\alpha  ((r+1) z \Gt(z))^{r+1}  = (r+1) y (z \Gt(z)-1) (r z \Gt(z)+1)^{r-1},
\end{equation}
 precisely, the solution that behaves as $1/z$ as $z\to \infty$. 
  
In order to describe an explicit solution, we first consider the change of variable 
\begin{equation}\label{case 2:W and G}
W=1-\frac{1}{r \left(z \Gt(z)+\frac{1}{r}\right)} 
 \quad \text{or}\quad 
 \Gt(z)
 	= -\frac{1}{rz}\left(\frac{1}{W-1}-1\right)
\end{equation}
 and we set \(\hat{y}= \left(\frac{r}{r+1}\right)^r \, \frac{y}{\alpha }\). 
The density for the limiting zero distribution corresponds to the inverse Stieltjes transform of $\Gt$, that is
\(\sigma:(0,\hat{y}_{r,+}] \to \mathbb{R}\) with 
\[
	\sigma_r(x) = \frac{1}{2\pi\ii} \lim_{\epsilon \to 0+}\left( \Gt(x+\ii \epsilon) -\Gt(x-\ii \epsilon)  \right), 
\]
with $x$ real. 
We take into account 
\eqref{case 2:W and G}, which gives 
\[
 \Gt(z) = -\frac{1}{rz}\left(\frac{\overline{W}-1}{|W-1|^2}-1\right) = \frac{1}{rz} \frac{-\rho\e^{-\ii \theta}}{(\rho^2 -2\rho \cos(\theta)+1) }+\frac{1}{rz}.
\]
Therefore, the limiting zero distribution can be represented in parametric form as follows 
\begin{subequations}
\begin{equation} \label{Case B: density map}
\begin{array}{rcl} 
\left[0,\tfrac{\pi}{(r+1)}\right] & \longrightarrow &  \left(0, \frac{(r+1)^{r+1}}{r^r}\right] \times \left[0,+\infty\right)\\[0.3cm]
\theta &\longmapsto & \left( y(\theta) \ ,\ \sigma_r(y(\theta)) \right) 
\end{array}
\end{equation}
where
\begin{equation}\label{Case B: density}
\sigma_r(y(\theta)) 
= \frac{1}{\pi}\, \frac{\rho (\theta) \sin (\theta )}{r y(\theta) \left(-2 \rho(\theta) 
   \cos (\theta )+\rho(\theta) ^2+1\right)}
\end{equation}
and  
\begin{align} 
& \rho(\theta) =  \frac{ \left(2 (2 r+1) \sin
   ( r\theta  )-\sqrt{-8 r (r+1) \cos (2 \theta )-2
   \cos (2  r\theta  )+2 (2 r+1)^2}\right)}{4
   (r+1)\sin ( (r-1)\theta )} \label{Case B: rho}
\\
&
y(\theta):=x^{r+1}(\theta)=\frac{\alpha  \left(\frac{r+1}{r}\right)^r  \rho(\theta) ^r }{-2 
    (r+1) \cos (\theta ) \rho(\theta)+2 r+1} \frac{\sin ((r+1)\theta  )}{\sin(\theta )}. \label{Case B: y}
\end{align}
\end{subequations}

\begin{figure}[ht!!]
\parbox{0.5\textwidth}{
\includegraphics[width=0.45\textwidth]{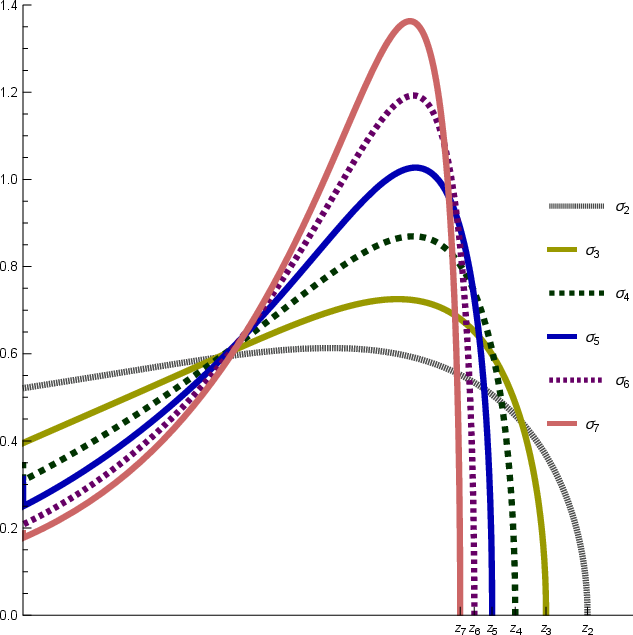}\\
{\footnotesize Plot of the densities for the distribution of the $(r+1)$th principal roots of the zeros of the rescaled polynomials, under the assumption that $\alpha_j=\alpha$ for $j=1,\ldots , r-1$ and $\alpha_r=(r+1)\alpha$.}}\quad 
\parbox{0.5\textwidth}{
\includegraphics[width=0.45\textwidth]{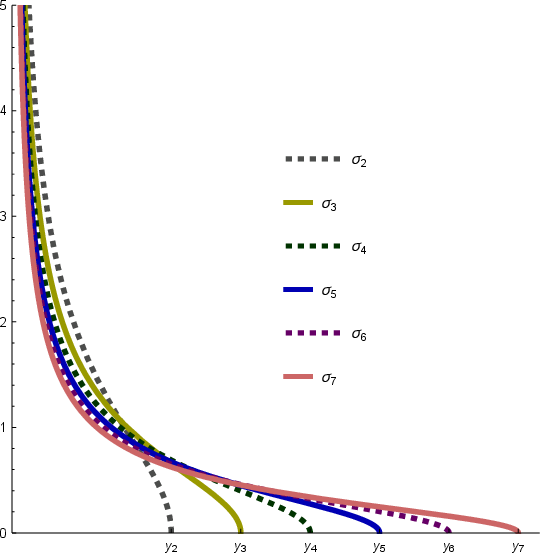} \\
{\footnotesize Plot of the densities  $\sigma_r$ for the distribution of the zeros of the rescaled polynomials for $r=2,\ldots,r$ and $\alpha=r^r (r+1)^{-r}$. Case where $\alpha_j=\alpha$ for $j=1,\ldots , r-1$ and $\alpha_r=(r+1)\alpha$.\\}}
Here $z_r = (y_r)^{\frac{1}{r+1}} $ and 
\[\begin{array}{l@{\qquad}l@{\qquad}l}
y_2=  \frac{\left(10+7 \sqrt{7}\right)}{27} ,
& y_3= 
   \frac{\left(471+133 \sqrt{57}\right)}{972} ,
&y_4= 
   \frac{43}{72}+\frac{137 \sqrt{\frac{2}{3}}}{81},\\[0.4cm] 
y_5= 
   \frac{612595+124091 \sqrt{145}}{864000},
&
   y_6=  \frac{7475156+2651881
   \sqrt{51}}{9112500},
   &y_7=  \frac{104324941+16456635
   \sqrt{273}}{112021056} 
\end{array}\]
\end{figure}

The asymptotic behavior as $\theta$ approaches $0$ or $\frac{\pi}{r+1}$ is given by 
\begin{equation*}
\sigma_r(y(\theta))=
\begin{cases}
a_r\,\theta+\mathcal{O}(\theta^2),
& \theta\to0^+,\\[3mm]
b_r\left(\dfrac{\pi}{r+1}-\theta\right)^{-r}
\left(1+\mathcal{O}\!\left(\dfrac{\pi}{r+1}-\theta\right)\right),
& \displaystyle \theta\to\left(\tfrac{\pi}{r+1}\right)^{-},
\end{cases}
\end{equation*}
where
\begin{align*}
a_r
&=
\frac{r^{r-1}\bigl(2r+1-2(r+1)c_r\bigr)}
{\pi\alpha\,(r+1)^{r+1}c_r^{\,r-1}(1-c_r)^2},
\quad \text{with}\quad 
c_r
=
\frac{r(2r+1)-\sqrt{r(5r+4)}}{2(r^2-1)},
\\[2mm]
b_r
&=
\frac{(2r+1)^r
\sin^{\,r+1}\!\left(\dfrac{\pi}{r+1}\right)}
{\pi\alpha\,(r+1)^{2r}}.
\end{align*}

In order to get the corresponding $\St$-transform associated with this $\Gt$-transform, we write 
\[
	\Gt(z) = \frac{1}{z} \left( H(F(y)) + 1\right), \quad \text{where}\quad H(u) = \frac{1}{1-\alpha  (r+1) u}-1.
\] 
 Based on Proposition \ref{PropFree}, we compute the inverse function 
 \[g^{-1}(w) =\frac{w}{\alpha(r+1)(w+1)} \]
and, hence, we conclude that 
 \[
    \mathcal{S}(w) 
    = \frac{r^{r-1}\left( w+\frac{r+1}{r} \right)^{r-1}}{\alpha  (r+1)^r \left( w+1\right)^r}
    = \frac{r^{r-1}}{\alpha  (r+1)^r}\ 
    \frac{ \left( w+\frac{r+1}{r} \right)^{r-1}}{ \left( w+1\right)^r}. 
 \]

We take into account \cite[Theorem 3.9 ]{MF-MP2} (see Theorem \ref{Thm MF}) to conclude that the asymptotic zero distribution of $(Q_n(x))_{n\geq0}$ coincides with that of an hypergeometric polynomial 
\[\pFq{r}{r}{-n,\frac{n}{r}+a_1,\ldots,\frac{n}{r}+a_{r-1}}{b_1,\ldots, b_r}{nx},  \]
where \(a_1,\ldots,\ a_{r-1} ,\ b_1, \ldots ,\  b_r\) are real constants, and we write 
\[   	Q_n(x) \simeq \frac{(-1)^n(b_1)_n\cdots (b_r)_n}{(\frac{n}{r}+a_1)_n\cdots (\frac{n}{r}+a_r)_n} \pFq{r}{r}{-n,\frac{n}{r}+a_1,\ldots,\frac{n}{r}+a_{r-1}}{b_1,\ldots, b_r}{nx},  \]
meaning that the two sequences share the same asymptotic zero limiting distribution. The two sequences are not the same.

\subsection*{Particular case of  a threefold symmetric polynomial sequence} 

One particular polynomial sequence  satisfying \eqref{eq:1}-\eqref{eq:asper} for \(r=2\) was studied in detail in  \cite[\S 3.1]{AnaWVA}. For that case, the recurrence coefficients are 
 \begin{align*}
	\gamma_{2n}:=\gamma_{2n}(\mu)&=\frac{2}{3 } \frac{(n+1)(2n+1)  }{(3n+\mu+2)} \\
    \gamma_{2n+1}:=\gamma_{2n +1}(\mu) &=  \frac{2}{3 }\frac{(n+1)(2n+3)(n+\mu+1) }{(3n+\mu+2)(3n+\mu+5)}, 
 \end{align*}
 and, thus
 \[
	\gamma_n \sim \frac{2}{3^{j+2}} n\quad \text{if} \quad n=j\mod 2, 
 \]
 with $\mu>-1$. The choice $\mu =1$ gives linear coefficients \(\gamma_{2n}=\frac{2}{9} (2n+1)\) and \( {\gamma_{2n +1} =  \frac{2}{27 }(2n+3)}.\)
 The polynomials satisfying the recurrence relation \eqref{eq:1} are given by 
  \[
 	P_{3n+k}(x;\mu) = \frac{(-1)^n  \left(\mathbf{c}_k\right)_n}{\left(\frac{n}{2}+\frac{a}{3}+b_k\right){}_n}
		\pFq{2}{2}{-n,\frac{n}{2}+\frac{a}{3}+b_k}{\mathbf{c}_k}
   	{x^3} 
	, \quad \text{for} \quad k=0,1,2\ \text{and} \ n\geq 0, 
 \]
 with \(a= \mu +\frac{3(-1)^n+5}{4}\), \(b_0=0,\ b_1=\frac14, \ b_2=1\) and \(\mathbf{c}_1=(\tfrac13,\tfrac23) ,\ \mathbf{c}_2=(\tfrac23,\tfrac43) \) and \(\mathbf{c}_0=(\tfrac43,\tfrac53) \). This is indeed a particular example that falls in the framework of the specification in this section. Particularly, one has that  $\Phi(z,t)= {t}^{-2/3} \Phi( z {t}^{-1/3},1)$ with $\Phi(z,1):=zF(y)$ and $y=z^3$ satisfying the algebraic equation \eqref{case2: eq for F} with $r=2$. 
The algebraic function $F$  lives on a three-sheeted Riemann-surface $\frak{R} = \frak{R}_1 \cup \frak{R}_2 \cup \frak{R}_3$ and
the projections of $F$ to the three sheets $\frak{R}_1, \frak{R}_2, \frak{R}_3$ correspond to three solutions $F_1,F_2,F_3$ of the algebraic equation.
The branch points of the algebraic equation are described in \eqref{branch pts Case2}-\eqref{branch pts Case2 table} with $r=2$. Precisely, in this case, the branch points are
\[   c_0^* =0^2, \quad c_1^* = \frac{10+7\sqrt{7}}{27}, \quad c_2^* =\frac{10-7\sqrt{7}}{27}.  \]
The sheets $\frak{R}_1$ and $\frak{R}_2$ are glued together along the interval $[0,c_1^* ]$, the sheets $\frak{R_2}$ and $\frak{R}_3$ are glued together
along the interval $[c_2^* ,0]$. The three branches of $F$ have the asymptotic behavior
\[  \begin{cases}   F_1(y) =  \mathcal{O}(1/y), \\
                    F_2(y) = \frac92  + {o}(1), \\
                    F_3(y) = \frac{27}{2} + {o}(1), 
    \end{cases}  \quad \text{when} \quad y \to \infty.\]

\begin{figure}[ht!]
\begin{tabular}{lr}
\includegraphics[width=0.4\textwidth]{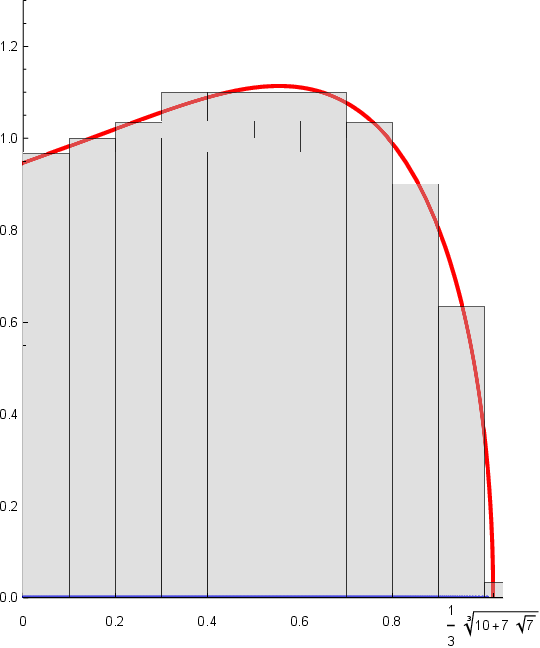}
& \includegraphics[width=0.55\textwidth]{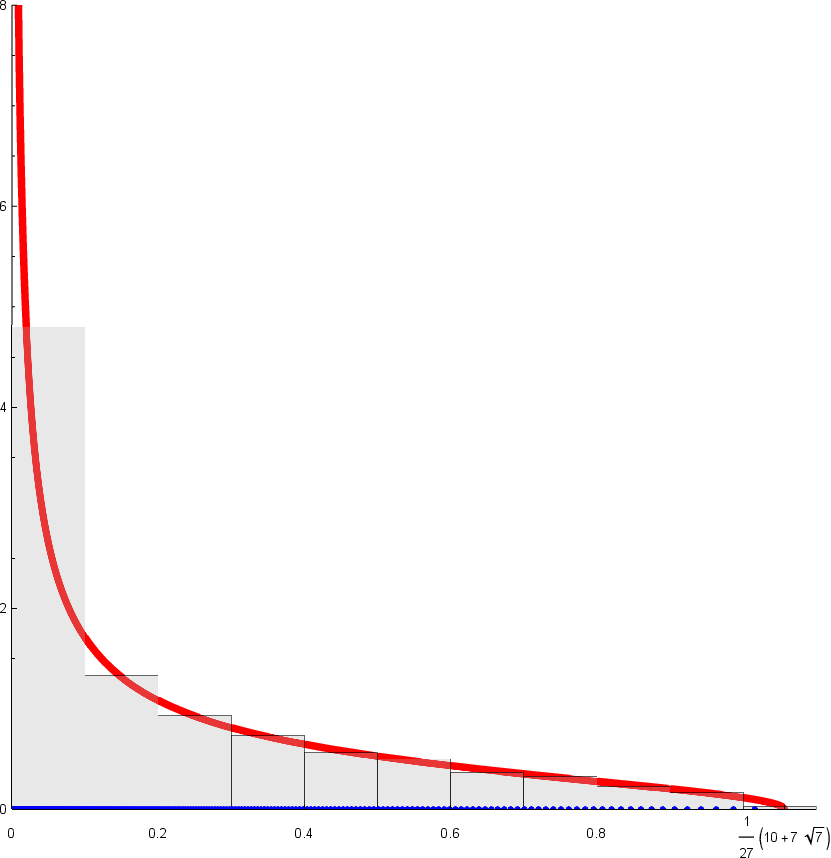}
\end{tabular}
\caption{Density in red of the inverse Stieltjes transform of \(\widehat{\Gt}\) and \(\Gt\) and histogram for the $300$ real roots (on the left) of \(Q_{900}(z)\) and the roots (on the right) of the $300$ positive zeros of \(Q^{[0]}_{300}(z^3):=Q_{900}(z)\) with $\mu=3$. The small blue dots in the horizontal axis represent those roots. }
\end{figure} 
 
 For this concrete example we are able to numerically compute the zeros of the polynomials and analyze the histogram for the zero counting measure. The scaled polynomial $Q_{900}(z;900)$ is a polynomial $Q_{300}^{[0]}(z^3)$ of degree $300$ in $z^3$. For the choice of $\mu=3$, the histogram is plotted below. The red curves correspond to the densities of the cubic roots given in parametric form in \eqref{Case B: density map}-\eqref{Case B: y} with $r=2$ (which are of course independent of the value of $\mu$). An algebraic expression can be obtained by using Cardano's formula for solving the cubic equation for the Stieltjes transform and choosing the correct branch. From here we obtain its inverse transform which leads to the following expression for the density: 
  \[
	\sigma(y) = \frac{7 \sqrt{3} }{12 \pi  }\Big(  f_{+}(y)-f_{-}(y)\Big)
   \left(f_{-}(y)+f_{+}(y)+\tfrac{1}{\sqrt{7}}\right)
 \]
where 
\[
	f_{\pm}(y) =\frac{\sqrt{7}}{7}\sqrt[3]{\frac{9+10 y \pm 3 \sqrt{9+20 y-27 y^2}}{y}}
\]
with $y\in(0,\frac{10+7\sqrt{7}}{27}]$. 

\newpage 

\appendix
\addcontentsline{toc}{section}{Appendix}
\section*{Appendix}


\section{Marchenko-Pastur distribution}
The Marchenko-Pastur distribution $MP(\lambda,\sigma^2)$ describes the distribution of eigenvalues of random matrices from the Wishart ensemble \cite{Marchenko-Pastur}. It is given by the density
\begin{equation}  \label{MP}
   w_{MP}(x) = \frac{1}{2\pi \sigma^2} \frac{\sqrt{(\lambda_+-x)(x-\lambda_-)}}{\lambda x}, \qquad
   \lambda_- \leq x \leq \lambda_+, 
\end{equation}
where $\lambda_{\pm} = \sigma^2(1\pm \sqrt{\lambda})^2$ and $0 <\lambda \leq 1$. If $\lambda >1$ then
one needs to add a dirac measure at the origin and consider $(1-1/\lambda)\delta_0 + w_{MP}$.
This is also the asymptotic distribution of the zeros of Laguerre polynomials $L_n^{(\alpha)}(nx)$
for which the parameter $\alpha=\alpha_n$ grows in such a way that 
$\lim_{n \to \infty} \alpha_n/n = a > -1$ \cite{Gawronski-Shawyer}.
In that case $\sigma^2=1+a$ and $\lambda \sigma^2 = 1$.
The Stieltjes transform is
\begin{equation}   \label{MP-G}
    \Gt(z) = \frac{z-\sigma^2(1-\lambda)-\sqrt{(z-\sigma^2(\lambda+1))^2-4\lambda \sigma^4}}
      {2\lambda z \sigma^2},  
\end{equation}
the $R$-transform is
\begin{equation}  \label{PM-R}
     R(z) = \frac{\sigma^2}{1-\sigma^2 \lambda z}, 
\end{equation}
and the $\St$-transform is
\begin{equation}   \label{MP-S}
      \St(u) = \frac{1}{\sigma^2(1+\lambda u)}.
\end{equation}
                 
\section{Kesten-McKay distribution}
The Kesten-McKay distribution $KM(a,b)$ appears in random walks and random graph theory \cite{Kesten,McKay} but also gives the distribution of eigenvalues of certain random matrices and the asymptotic distribution of the zeros of Jacobi polynomials $P_n^{(\alpha_n,\beta_n)}$ when $\lim_{n \to \infty} \alpha_n/n = a \geq 0$ and $\lim_{n \to \infty} \beta_n/n = b  \geq 0$ \cite{Gawronski}.   We will use the non-symmetric version, with density
\begin{equation}   \label{KM}
    w_{KM}(x) = \frac{2}{C\pi} \frac{\sqrt{(x-\lambda_-)(\lambda_+-x)}}{x(1-x)}, \qquad  \lambda_- \leq x < \lambda_+ , 
\end{equation}
where $0 \leq \lambda_- < \lambda_+ \leq 1$ and
\[   C= (\sqrt{\lambda_-}-\sqrt{\lambda_+})^2+(\sqrt{1-\lambda_-}-\sqrt{1-\lambda_+})^2. \]
For the asymptotic distribution of the zeros of Jacobi polynomials on $[0,1]$, one has
\[      \lambda_{\pm} = \frac{(a+1)(a+b+1)+b+1 \pm 2\sqrt{(a+1)(b+1)(a+b+1)}}{(a+b+2)^2} \]
and $a,b \geq 0$. This gives a mapping between $(\lambda_-,\lambda_+)$ and $(a,b)$ and some of the formulas are more easily expressed in terms of $(a,b)$.
The moments for the (symmetric) Kesten distribution are given in \cite{Hasegawa-Saito}. The Stieltjes transform is
\begin{equation}   \label{KM-G}
   \Gt(z) = \frac{(a+b)z-a - (a+b+2)\sqrt{(z-\lambda_-)(z-\lambda_+)}}{2z(1-z)}, 
 \end{equation}  
and the $\St$-transform is
\begin{equation}  \label{KM-S}
   \St(u) = \frac{u+a+b+2}{u+a+1}.  
\end{equation}

If $a <0$ or $b < 0$ then one has to add dirac distributions at $0$ and/or $1$. For instance
for $0 < \lambda < 1$ the distribution
\[   \lambda \delta_1 + (1-\lambda) KM\left(\frac{a}{1-\lambda},\frac{\lambda}{1-\lambda}\right) \]
has $\St$-transform
\begin{equation} \label{MK-S1}
    \St(u) = \frac{u+2+a-\lambda}{u+1+a}, 
 \end{equation}
 which corresponds to \eqref{KM-S} with $b=-\lambda$. Furthermore the distribution
 \[    \lambda \delta_0 + (1-\lambda) KM\left( \frac{\lambda}{1-\lambda},\frac{b}{1-\lambda}
     \right)  \]
 has $\St$-transform
 \begin{equation}   \label{MK-S0}
     \St(u) = \frac{u+2+b-\lambda}{u+1-\lambda},
  \end{equation}
 which corresponds to \eqref{KM-S} for $a=-\lambda$.    
Finally, for $\lambda_1,\lambda_2 >0$ and $\lambda_1+\lambda_2 < 1$ the distribution
\[  \lambda_1 \delta_0 + \lambda_2 \delta_1 + (1-\lambda_1-\lambda_2) KM \left( \frac{\lambda_1}{1-\lambda_1-\lambda_2}, \frac{\lambda_2}{1-\lambda_1-\lambda_2}\right) \]
has $\St$-transform
\begin{equation} \label{KM-01}
     \St(u) = \frac{u+2-\lambda_1-\lambda_2}{u+1-\lambda_1}, 
\end{equation}
which corresponds to \eqref{KM-S} with $a=-\lambda_1$ and $b=-\lambda_2$.

\section{The Fuss-Catalan distribution}
The Fuss-Catalan numbers are
\[   C_k = \frac{1}{kp+1} \binom{kp+k}{k},    \qquad k \in \mathbb{N}, \]
with $p$ a positive integer. They are the moments of the Fuss-Catalan distribution. 
The density is given by (see, e.g., \cite[Thm. 3.3]{NeuschelPR})
\[  v(x) = \frac{ \sin^2 \varphi \sin^{p-1} p\varphi}{\pi \sin^p (p+1)\varphi}, 
  \qquad 0 < x < \frac{(p+1)^{p+1}}{p^p}, \]
where
\[    x = \frac{ \sin^{p+1} (p+1)\varphi}{\sin \varphi \sin^p p\varphi}.  \]
If $\Gt(z)$ is the Stieltjes transform
of the Fuss-Catalan distribution, then $w=z\Gt(z)$ satisfies the algebraic equation
\[      w^{p+1} - zw + z = 0.   \]
The moment generating function $\Mt(z) = 1/z \Gt(1/z)-1$ hence satisfies the equation
\begin{equation}   \label{FC-M}
       z (\Mt+1)^{p+1} - \Mt = 0, 
\end{equation}
so that the inverse function is
\[     \Mt^{-1}(u) = \frac{u}{(u+1)^{p+1}}, \]
and the $\St$-transform is
\begin{equation}   \label{FC-S}
    \St(u) = \frac{1}{(u+1)^p}.
\end{equation}
Hence the Fuss-Catalan distribution is a $p$-fold free multiplicative convolution of the
Marchenko-Pastur distribution ($\sigma^2=1$, $\lambda=1$).

The Fuss-Catalan distribution appears as the limiting distribution of singular values of a product of Ginibre random matrices \cite{Forrester} and as the limiting distribution of the zeros of multiple Laguerre polynomials \cite{Neuschel}.

\bigskip

\noindent\textbf{Acknowledgment. } We would like to thank the support offered by the London Mathematical Society and the Isaac Newton Institute (INI) through the OPSFOTA network via the INI Network Support for Mathematical Sciences.


\begin{thebibliography}{10}
\bibitem{AKS} A. I. Aptekarev, V. A. Kalyagin, and E. B. Saff, 
\textit{Higher-order three-term recurrences and asymptotics of multiple orthogonal polynomials}, 
Constr. Approx. \textbf{30} (2009), no.~2, 175--223.
\bibitem{AKVI} A.I. Aptekarev, V.A. Kaliaguine,  J. Van Iseghem, 
\textit{The genetic sum's representation for the moments of a system of Stieltjes functions and its application}, Constr. Approx. \textbf{16} (2000), 487--524.
\bibitem{BR08} N. Ben Romdhane, 
\textit{On the zeros of $d$-symmetric $d$-orthogonal polynomials}, 
J. Math. Anal. Appl. \textbf{344} (2008), 888--897.
\bibitem{BleherKuijlaars04} P.M. Bleher, A.B.J. Kuijlaars, 
\textit{Random matrices with external source and multiple orthogonal polynomials},
Int. Math. Research Notices \textbf{2004} (2004), no. 3, 109--129.
\bibitem{DaemsKuijl} E. Daems, A.B.J. Kuijlaars, 
\textit{Multiple orthogonal polynomials of mixed type and non-intersecting Brownian motions},
J. Approx. Theory \textbf{146} (2007), no. 1, 91--114. 
\bibitem{DelLop} S. Delvaux, A. L\'opez, 
\textit{High-order three-term recursions, Riemann-Hilbert minors and Nikishin systems on star-like sets}, 
Constr. Approx. \textbf{37} (2013), 383--453.
\bibitem{DLL} S. Delvaux, A. L\'opez, G. L\'opez Lagomasino,
\textit{On a family of Nikishin systems with periodic recurrence coefficients}, 
(in Russian) Mat. Sb.  \textbf{204}  (2013),  no. 1, 47--78;  translation in 
Sb. Math.  \textbf{204}  (2013),  no. 1--2, 43--74.
\bibitem{DuitsKuijlMo} M. Duits, A.B.J. Kuijlaars, M.Y. Mo,
\textit{The Hermitian two-matrix model with an even quartic potential}, 
Memoirs Amer. Math. Soc. \textbf{217} No. 1022 (2012), vi+105 pp.
\bibitem{Forrester} P.J. Forrester, D-Z. Liu,
\textit{Raney distributions and random matrix theory},
J. Stat. Phys. \textbf{158} (2015), no. 5, 1051--1082.
\bibitem{Gawronski} W. Gawronski,
\textit{Strong asymptotics and the asymptotic zero distributions of Laguerre polynomials $L_n^{(an+\alpha)}$ and Hermite polynomials $H_n^{(an+\alpha)}$},
Analysis \textbf{13} (1993), 29--67.
\bibitem{Gawronski-Shawyer} W. Gawronski, B.L.R. Shawyer,
\textit{Strong asymptotics and the limit distribution of the zeros of Jacobi polynomials $P_n^{(an+\alpha,bn+\beta)}$},
in Progress in Approximation Theory (P. Nevai, A. Pinkus, eds.), Academic Press, New York, 1991, pp.~379--404.
\bibitem{Hasegawa-Saito} T. Hasegawa, S. Saito,
\textit{A note on the moments of the Kesten distribution},
Discrete Mathematics \textbf{344} (2021), nr. 10, 112524.
\bibitem{HJ} R. A. Horn  and C. R. Johnson, 
\textit{Matrix Analysis}, Cambridge University Press, 1985. 
\bibitem{Ismail} M.E.H. Ismail, \textit{Classical and Quantum Orthogonal Polynomials in One Variable},
Encyclopedia of Mathematics and its Applications \textbf{98}, Cambridge University Press,
2005.
\bibitem{Kesten} H. Kesten,
\textit{Symmetric random walks on groups},
Trans. Amer. Math. Soc. \textbf{92} (1959), 336--354.
\bibitem{LeursVA20} M. Leurs, W. Van Assche,
\textit{Laguerre--Angelesco multiple orthogonal polynomials on an $r$-star},
J. Approx. Theory \textbf{250} (2020), 105324.
\bibitem{Abey} A. L\'opez-Garc\'\i a, 
\textit{Asymptotics of multiple orthogonal polynomials for a system of two measures supported on a starlike set}, 
J. Approx. Theory \textbf{163} (2011), no.~9, 1146--1184.
\bibitem{LGLL1} A. L\'opez-Garc\'ia, G. L\'opez Lagomasino, 
\textit{Nikishin systems on star-like sets: ratio asymptotic formulae for the associated multiple orthogonal polynomials}, 
J. Approx. Theory \textbf{225} (2018), 1--40.
\bibitem{LGMD} A. L\'opez-Garc\'ia, E. Mi\~na-D\'iaz,
\textit{Nikishin systems on star-like sets: algebraic properties and weak asymptotics of the associated multiple orthogonal polynomials},
Mat. Sb.  \textbf{209}  (2018),  no. 7, 139--177 (in Russian);
translated in Sb. Math. \textbf{209} (2018), 1051--1088.
\bibitem{AnaWVA} A. Loureiro, W. Van Assche,
\textit{Threefold symmetric Hahn-classical multiple orthogonal polynomials}, Anal. Appl. (Singap.) \textbf{18} (2020), no. 2, 271-332.
\bibitem{Marchenko-Pastur} V.A. Marchenko, L.A. Pastur,
\textit{Distribution of eigenvalues for some sets of random matrices},
Mat. Sb. NS \textbf{72} (114:4) (1967), 507--536 (in Russian);
Math. USSR Sbornik \textbf{1} (1967), Nr.~4, 457--483.
\bibitem{McKay} B.D. McKay,
\textit{The expected eigenvalue distribution of a large regular graph},
Linear Algebra Appl. \textbf{40} (1981), 203--216.
\bibitem{MF-M-P1} A. Mart\'\i nez-Finkelshtein, R. Morales, D. Perales,
\textit{Real roots of hypergeometric polynomials via finite free convolution},
Int. Math. Res. Not. IMRN 2024, no. 16, 11642--11687.
\bibitem{MF-MP2}  A. Mart\'\i nez-Finkelshtein, R. Morales, D. Perales,
\textit{Zeros of generalized hypergeometric polynomials via finite free convolution: applications to multiple orthogonality},
Constr Approx \textbf{63} (2026), no.~1, 153--222. 
\bibitem{MingoSpeicher} J.A. Mingo, R. Speicher,
\textit{Free Probability and Random Matrices},
Fields Institute Monographs, vol. \textbf{35}, Springer, New York, 2017.
\bibitem{NeuschelPR} T. Neuschel,
\textit{Plancherel-Rotach formulae for average characteristic polynomials of products of 
Ginibre random matrices and the Fuss-Catalan distribution},
Random Matrices Theory Appl. \textbf{3} (2014), no.~1, 1450003, 18 pp.
\bibitem{Neuschel} T. Neuschel, W. Van Assche,
\textit{Asymptotic zero distribution of Jacobi-Pi\~neiro and multiple Laguerre polynomials},
J. Approx. Theory \textbf{250} (2016), 114--132. 
\bibitem{WVA} W. Van Assche,
\textit{Ratio asymptotics for multiple orthogonal polynomials}, 
in `Modern Trends in Constructive Function Theory', Contemp. Math. \textbf{661}, Amer. Math. Soc., Providence, RI, 2016, pp.~73--85.
\bibitem{Voiculescu} D.-V. Voiculescu, N. Stammeier, M. Weber,
\textit{Free Probability and Operator Algebras},
M\"unster Lectures in Mathematics, Europ. Math. Soc., 2016.
\end{thebibliography}
\end{document}